\DeclareMathOperator{\Lie}{Lie}\DeclareMathOperator{\im}{im}
\DeclareMathOperator{\rk}{rank}
\DeclareMathOperator{\ad}{ad}
\begin{document}

\newcommand{\cmark}{\ding{51}}
\newcommand{\xmark}{\ding{55}}

\newcounter{rownum}
\setcounter{rownum}{0}
\newcommand{\ab}{\addtocounter{rownum}{1}\arabic{rownum}}

\newcommand{\x}{$\times$}
\newcommand{\bb}{\mathbf}

\newcommand{\Ind}{\mathrm{Ind}}
\newcommand{\Char}{\mathrm{char}}
\newcommand{\hra}{\hookrightarrow}

\newtheorem{lemma}{Lemma}[section]
\newtheorem{theorem}[lemma]{Theorem}
\newtheorem*{claim}{Claim}
\newtheorem{cor}[lemma]{Corollary}
\newtheorem{conjecture}[lemma]{Conjecture}
\newtheorem{Proposition}[lemma]{Proposition}
\newtheorem{question}[lemma]{Question}

\theoremstyle{definition}
\newtheorem{example}[lemma]{Example}
\newtheorem{examples}[lemma]{Examples}
\newtheorem{algorithm}[lemma]{Algorithm}
\newtheorem*{algorithm*}{Algorithm}

\theoremstyle{remark}
\newtheorem{remark}[lemma]{Remark}
\newtheorem{remarks}[lemma]{Remarks}
\newtheorem{obs}[lemma]{Observation}

\theoremstyle{definition}
\newtheorem{defn}[lemma]{Definition}

  \def\hal{\unskip\nobreak\hfil\penalty50\hskip10pt\hbox{}\nobreak
  \hfill\vrule height 5pt width 6pt depth 1pt\par\vskip 2mm}

\renewcommand{\labelenumi}{(\roman{enumi})}

\def\bar{\overline}
\def\sub{\subseteq}
\def\iso{\cong}
\def\isoto{\overset{\sim}{\longrightarrow}}
\def\longto{\longrightarrow}

\def\kk{\Bbbk}

\def\ad{\operatorname{ad}}
\def\Ad{\operatorname{Ad}}
\def\Ext{\operatorname{Ext}}
\def\Cl{\operatorname{Cl}}
\def\Lie{\operatorname{Lie}}
\def\rad{\operatorname{rad}}
\def\sspan{\operatorname{span}}
\def\height{\operatorname{ht}}

\def\C{{\mathbb C}}
\def\F{{\mathbb F}}
\def\Z{{\mathbb Z}}
\def\G{{\mathbb G}}

\def\GL{\mathrm{GL}}
\def\SL{\mathrm{SL}}
\def\Sp{\mathrm{Sp}}
\def\SO{\mathrm{SO}}
\def\O{\mathrm{O}}
\def\PGL{\mathrm{PGL}}
\def\PSL{\mathrm{PSL}}

\def\b{\mathfrak b}
\def\g{\mathfrak g}
\def\h{\mathfrak h}
\def\l{\mathfrak l}
\def\s{\mathfrak s}
\def\p{\mathfrak p}
\def\q{\mathfrak q}
\def\t{\mathfrak t}
\def\u{\mathfrak u}
\def\m{\mathfrak m}
\def\n{\mathfrak n}
\def\c{\mathfrak c}
\def\X{\mathfrak X}

\def\gl{\mathfrak{gl}}
\def\sl{\mathfrak{sl}}
\def\sp{\mathfrak{sp}}
\def\so{\mathfrak{so}}
\def\pgl{\mathfrak{pgl}}

\def\cN{\mathcal N}
\def\cO{\mathcal O}
\def\cU{\mathcal U}
\def\cV{\mathcal V}
\def\cX{\mathcal X}
\def\cf{\mathcal F}

\def\oe{\overline{e}}
\def\oh{\overline{h}}
\def\of{\overline{f}}

\newenvironment{changemargin}[1]{%
  \begin{list}{}{%
    \setlength{\topsep}{0pt}%
    \setlength{\topmargin}{#1}%
    \setlength{\listparindent}{\parindent}%
    \setlength{\itemindent}{\parindent}%
    \setlength{\parsep}{\parskip}%
  }%
  \item[]}{\end{list}}

\parindent=0pt
\addtolength{\parskip}{0.5\baselineskip}

\subjclass[2010]{17B45}
\title{Monogamous subvarieties of the nilpotent cone}

\author{Simon M. Goodwin}
\address{School of Mathematics, University of Birmingham, Birmingham, B15 2TT, UK} \email{s.m.goodwin@bham.ac.uk {\text{\rm(Goodwin)}}}

\author{Rachel Pengelly}
\address{Department of Mathematics, The University of Manchester, Oxford Road, Manchester, M13 9PL, UK}
\email{ rachel.pengelly@manchester.ac.uk {\text{\rm(Pengelly)}}}

\author{David I. Stewart}
\address{Department of Mathematics, The University of Manchester, Oxford Road, Manchester, M13 9PL, UK}
\email{ david.i.stewart@manchester.ac.uk {\text{\rm(Stewart)}}}

\author{Adam R. Thomas} 
\address{Department of Mathematics, University of Warwick, Coventry, CV4 7AL, UK} 
\email{adam.r.thomas@warwick.ac.uk {\text{\rm{(Thomas)}}}}
\dedicatory{In memory of Gary, who influenced us greatly}

\pagestyle{plain}
\begin{abstract} Let $G$ be a reductive algebraic group over an algebraically closed field $\kk$ of prime characteristic not $2$, whose Lie algebra is denoted $\g$. We call a subvariety $\X$ of the nilpotent cone $\cN\subset \g$ \emph{monogamous} if for every $e\in \X$, the $\sl_2$-triples $(e,h,f)$ with $f\in \X$ are conjugate under the centraliser $C_G(e)$. Building on work by the first two authors, we show there is a unique maximal closed $G$-stable monogamous subvariety $\cV\subset \cN$ and that it is an orbit closure, hence irreducible. We show that $\cV$ can also be characterised in terms of Serre's $G$-complete reducibility.
\end{abstract}
\maketitle

\section{Introduction}

Let $\kk$ be an algebraically closed field of characteristic $p \neq 2$, and $G$ a simple algebraic $\kk$-group  with Lie algebra $\g = \Lie (G)$. Three elements $e,h,f \in \g$ form an $\sl_2$-triple if the subalgebra $\left< e,h,f \right>$ is a homomorphic image of $\sl_2(\kk)$. That is, $(e,h,f)$ satisfy the relations\footnote{When the characteristic is two these relations degenerate leading to a qualitatively different theory; see \cite{SteTh24} for more details. This justifies our underlying assumption of $p \neq 2$.}
\[ [h,e]=2e, \quad [h,f]=-2f, \quad [e,f]=h. \]
Theorems of Jacobson--Morozov and Kostant say that if $\kk$ is of characteristic $0$, then for any nilpotent $e \in \g$ there exists an $\sl_2$-triple $(e,h,f)$ in $\g$ which is unique up to conjugacy by the centraliser of $e$ in $G$, see \cite{Morozov,Jacobson,Kostant}. 

Over fields of positive odd characteristic, for any nilpotent $e \in \g$ there exists an $\sl_2$-triple $(e,h,f)$ in $\g$ except in the case $G$ is of type $G_2$, $p=3$, and $e$ is in the $\tilde{A_1}_{(3)}$ class \cite[Theorem 1.7]{ST18}. We continue the investigation into generalising Kostant's uniqueness theorem to fields of small characteristic. Let $\X$ be a subset of the nilpotent cone $\cN \subset \g$. We say that $\X$ is \emph{monogamous} if the following property holds: 

\begin{center}
{Let $(e,h,f)$ and $(e,h',f')$ be $\sl_2$-triples with $e,f, f' \in \X$. Then $(e,h,f)$ is $C_G(e)$-conjugate to $(e,h',f')$.}
\end{center}

The main theorem of \cite{ST18} proves that $\cN$ is monogamous if and only if $p > h(G)$, where $h(G)$ is the Coxeter number for $G$. When $G$ is of classical type, the first two authors \cite{GP} showed that there exists a unique maximal $G$-stable closed subvariety of $\cN$ that is monogamous, and give an explicit description of these. This paper completes the story by treating the exceptional types. Define the following subset of $\cN$:  
\[ \cV  := \left\{ x \in \cN \  \left| \  \begin{array}{l} 
 x^{[p]}=0, \ \\
x \text{ is not regular in a Levi subalgebra with a factor of type } A_{p-1}, \text{ and} \\
x \text{ is not subregular if } G \text{ is of type } G_2 \text{ and } p=3.
\end{array}   \right\}\right. \]

\begin{theorem} \label{thm:mainKos}
Let $G$ be a simple algebraic group over an algebraically closed field $\kk$ of characteristic $p > 2$. Then $\cV$ is the unique maximal $G$-stable closed monogamous subvariety of $\cN$. Furthermore, $\cV$ is irreducible, being the closure of a single orbit as specified in Tables \ref{tab:closurecVclass} and \ref{tab:closurecVexcep} below.
\end{theorem}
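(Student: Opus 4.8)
The plan is to reduce Theorem~\ref{thm:mainKos} to the identification of $\cV$, as a subset of $\cN$, with the closure $\overline{\cO}$ of the single orbit $\cO$ recorded in Tables~\ref{tab:closurecVclass} and~\ref{tab:closurecVexcep}, and then to establish two assertions: (1) $\overline{\cO}$ is monogamous, and (2) every $x \in \cN \setminus \cV$ lies in no $G$-stable closed monogamous subvariety of $\cN$. That $\cV$ is $G$-stable is clear, each of the three defining conditions being $G$-invariant, and once $\cV = \overline{\cO}$ is known it is automatically a closed, irreducible subvariety. Since a $G$-stable closed subvariety of $\cN$ is a finite union of nilpotent orbit closures, hence determined by the lower set (in the closure order) of orbits it contains, (1) and (2) together say precisely that $\cV = \overline{\cO}$ is the unique maximal $G$-stable closed monogamous subvariety, which is the theorem.

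For the set-level equality $\cV = \overline{\cO}$ I would argue orbit-by-orbit, using the Bala--Carter classification of nilpotent orbits with its characteristic-$p$ refinements (Pommerening, Premet; Holt--Spaltenstein in the classical types) together with the closure order. Three things must be checked: that a representative $e \in \cO$ satisfies all three conditions --- in particular $e^{[p]} = 0$, which one reads off from the weighted Dynkin diagram or height of $\cO$, that $e$ is not regular in a Levi subalgebra with an $A_{p-1}$ factor, and, when $G$ is of type $G_2$ and $p = 3$, that $e$ is not subregular; that the three conditions persist under degeneration, giving $\overline{\cO} \subseteq \cV$ (for $e^{[p]} = 0$ because $\{x : x^{[p]} = 0\}$ is Zariski-closed, for the other two because the excluded orbits are not $\le \cO$); and that $\cO$ dominates every orbit satisfying the three conditions, giving $\cV \subseteq \overline{\cO}$. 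This is the combinatorial core of the argument: in the classical types it repackages \cite{GP}, and in the exceptional types it is a finite, partly computer-assisted, verification organised by Bala--Carter label.

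To prove (1), let $(e,h,f)$ and $(e,h',f')$ be $\sl_2$-triples with $e,f,f' \in \overline{\cO}$, say $e$ in the orbit $\cO' \le \cO$. Since $\overline{\cO} \subseteq \{x : x^{[p]} = 0\}$ we have $e^{[p]} = f^{[p]} = (f')^{[p]} = 0$; one uses this, together with the fact that $e$ (having small height) possesses an associated cocharacter $\lambda : \G_m \to G$ unique up to $C_G(e)$-conjugacy (Premet, Pommerening), to reduce --- after conjugating by $C_G(e)$ --- to the case $h = \mathrm{d}\lambda(1) = h'$, whence $f, f' \in \g(-2;\lambda)$ and $(\ad e) f = (\ad e) f' = h$. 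It then remains to show that $f$ and $f'$ are conjugate under $C_G(e) \cap C_G(h)$, i.e.\ that the fibre of $\ad e \colon \g(-2;\lambda) \to \g(0;\lambda)$ over $h$ is a single orbit; equivalently, that a certain obstruction (a class in an $H^1$, or a cokernel of $\ad e$ on the relevant graded pieces) vanishes. This vanishing is exactly what fails for $e$ regular in an $A_{p-1}$-Levi and for the $G_2$ subregular orbit at $p = 3$, and what must hold for every orbit $\le \cO$; it is supplied by \cite{GP} in the classical types and, in the present paper, by the exceptional-type analysis, once more partly computational. I expect this to be the principal obstacle in the exceptional types: a uniform argument requires controlling the failure locus of $\ad e$ on $\g(-2;\lambda)$ across all orbits $\le \cO$, and short of that one falls back on a case enumeration. (An attractive alternative --- presumably the route behind the $G$-complete reducibility characterisation announced in the abstract --- is to show that $x \in \cV$ is equivalent to $G$-complete reducibility of the subgroup generated by an $\sl_2$-triple through $x$, or of the image of an associated homomorphism from $\SL_2$ or $\G_a$, and to deduce (1) from the conjugacy theory of $G$-completely reducible subgroups.)

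For (2), given $x \in \cN \setminus \cV$ I would split according to which defining condition fails and, in each case, exhibit inside $\overline{Gx}$ an element $x'$ carrying two $\sl_2$-triples $(x',h,f)$ and $(x',h',f')$ that are not $C_G(x')$-conjugate, with $f$ and $f'$ nilpotent lying in orbits contained in $\overline{Gx}$. Since any $G$-stable closed $\cY$ with $x \in \cY$ contains $\overline{Gx}$, this shows no such $\cY$ is monogamous, so every $G$-stable closed monogamous subvariety is contained in $\cV$; with (1) this proves the theorem. When $x$ is regular in a Levi with an $A_{p-1}$ factor, or $G = G_2$, $p = 3$ and $x$ is subregular, one may take $x' = x$: in the first case the two triples are $(x,h_0,f_0)$ and $(x,h_0,f_0 + c\,e^{p-1})$, the extra freedom arising because a Jordan block of size $p$ in a type-$A$ Levi factor makes two $\ad h_0$-weight spaces coincide modulo $p$ (as in \cite{ST18}); in the $G_2$ case the two triples are written down directly. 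When $x^{[p]} \ne 0$ one works with $\overline{Gx}$ rather than $x$ itself: one shows, by a case-by-case argument reducing via Bala--Carter, that $\overline{Gx}$ always contains a nilpotent orbit that is regular in a Levi with a simple factor of type $A_m$ for some $m \ge p-1$ (for classical $G$ one may exhibit such an orbit explicitly; in type $A$ the orbit of Jordan type $(p+1, 1, \ldots, 1)$ already lies below every Jordan type with a part exceeding $p$), and such an orbit is non-monogamous by \cite{ST18}, its Coxeter number $m+1$ exceeding $p$. Assembling the three cases completes (2), and hence the theorem.
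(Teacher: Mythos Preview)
Your global architecture---identify $\cV$ as a single orbit closure, prove it monogamous, then show every $G$-stable closed set properly containing it fails---matches the paper. But two of the steps contain genuine gaps.

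\textbf{Monogamy of $\cV$.} After reducing to $h=h'$ via an associated cocharacter $\tau$, you assert $f,f'\in\g(-2;\tau)$ and recast the problem as a single fibre of $\ad e\colon\g(-2)\to\g(0)$. In characteristic $p$ this is false: $h=\mathrm d\tau(1)$ acts on $\g(i)$ by the scalar $i\bmod p$, so the $(-2)$-eigenspace of $\ad h$ is $\bigoplus_{r\in\Z}\g(-2+rp)$, not $\g(-2)$. The correct statement (Lemma~\ref{lem:hunique}) is $f-\bar f\in\bigoplus_{r>0}\g_e(-2+rp)$, where $\bar f$ is the unique completion in $\g(-2)$ and $\g_e(i)=\c_\g(e)\cap\g(i)$; killing these higher-degree pieces---using $f^{[p]}=0$ together with the rank bound $\rk(\ad f)^{p-1}\le\rk(\ad e)^{p-1}$---is exactly the computational content of the paper (Steps~1--3 in \S\ref{ss:badchar}--\ref{ss:goodchar}), not an abstract $H^1$-vanishing. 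Separately, your reduction to $h=h'$ presupposes the good-characteristic theory of associated cocharacters; when $p$ is bad the paper instead proves $h$ is unique up to $C_G(e)$-conjugacy by hand (Proposition~\ref{prop:hisfixed}), using smoothness of $N_G(\langle e\rangle)$ and further explicit checks.

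\textbf{Maximality.} For the case $x^{[p]}\ne0$ you claim that $\overline{G\cdot x}$ always contains an orbit regular in a Levi with an $A_m$-factor for some $m\ge p-1$. This is false in exceptional types: take $G=F_4$ and $p=5$, where the minimal neighbours of $\cV$ are the orbits $B_3$ and $C_3$, yet $F_4$ has no Levi subalgebra with an $A_m$-factor for any $m\ge3$, so no such orbit exists in $\cN$ at all. The same obstruction recurs for $F_4$ at $p=7,11$ and in $E_6,E_7,E_8$ at various primes. The paper's argument (Proposition~\ref{P:uniquemax}) proceeds differently: for each minimal neighbour $e$ with $e^{[p]}\ne0$ one checks that $e$ is distinguished in a Levi $L$ for which $p$ is good and which has no $A_{p-1}$-factor, then invokes \cite{PremetStewart} to produce an $\sl_2$-triple $(e,h,f)$ with $f^{[p]}=0$ and $f\in\cV$; this triple is non-$G$-cr by Lemma~\ref{lem:goodcharpsub}, hence not $C_G(e)$-conjugate to the $G$-cr triple through $e$ supplied by Proposition~\ref{prop:Gcrovergp}. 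Your $A_m$-reduction cannot replace this step.
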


In \cite{ST18}, a close relationship was found between uniqueness of $\sl_2$-subalgebras and the existence of so-called non-$G$-cr $\sl_2$-subalgebras. The notion of $G$-complete reducibility for subgroups of $G$ is due to Serre \cite{Ser05}, and the natural generalisation to subalgebras of $\g$ was introduced by McNinch \cite{McN07}.  Given a subalgebra $\h \subseteq \g$, we say that $\h$ is \emph{$G$-completely reducible} ($G$-cr for short) if for every parabolic subalgebra $\p$ such that $\h \subseteq \p$ there exists some Levi subalgebra $\l$ of $\p$ with $\h \subseteq \l$.

 We say $\X \subseteq \cN$ is \emph{$A_1$-$G$-cr} if every subalgebra generated by an $\sl_2$-triple $(e,h,f)$ with $e,f \in \X$ is $G$-cr.   

\begin{theorem} \label{thm:mainGcr}
Let $G$ be a simple algebraic group over an algebraically closed field $\kk$ of characteristic $p > 2$. Then $\cV$ is the unique maximal $G$-stable closed $A_1$-$G$-cr subvariety of $\cN$.
\end{theorem}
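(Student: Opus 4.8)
The plan is to deduce this from Theorem~\ref{thm:mainKos} by comparing the two conditions --- \emph{monogamous} and \emph{$A_1$-$G$-cr} --- on $G$-stable closed subvarieties of $\cN$, using the link between uniqueness of $\sl_2$-triples and non-$G$-cr $\sl_2$-subalgebras from \cite{SteTh}. Everything reduces to two assertions: (A) $\cV$ is $A_1$-$G$-cr; and (B) any $G$-stable closed $A_1$-$G$-cr subvariety $\X$ satisfies $\X\subseteq\cV$. Granting these, $\cV$ is an $A_1$-$G$-cr subvariety containing all others, so it is the unique maximal one, and its description as an orbit closure is copied from Theorem~\ref{thm:mainKos}.

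For (A): by Theorem~\ref{thm:mainKos} the variety $\cV$ is monogamous, and by construction every $x\in\cV$ satisfies $x^{[p]}=0$. Let $(e,h,f)$ be an $\sl_2$-triple with $e,f\in\cV$. Since $e$ is $[p]$-nilpotent, I would first locate a $G$-cr $\sl_2$-triple $(e,h_0,f_0)$ through $e$ with $f_0$ lying in $\overline{G\cdot e}\subseteq\cV$; for $G$ classical this is available from \cite{GP}, and for $G$ exceptional it is a finite check against Table~\ref{tab:closurecVexcep}, building $(e,h_0,f_0)$ from the Bala--Carter datum of $e$. Then both $(e,h,f)$ and $(e,h_0,f_0)$ are $\sl_2$-triples with all three entries in $\cV$, so monogamy of $\cV$ makes them $C_G(e)$-conjugate; as $\langle e,h_0,f_0\rangle$ is $G$-cr and $G$-complete reducibility is a conjugacy invariant, $\langle e,h,f\rangle$ is $G$-cr. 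Hence $\cV$ is $A_1$-$G$-cr.

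For (B): suppose $\X$ is $G$-stable, closed and $A_1$-$G$-cr but $\X\not\subseteq\cV$. Since $\X$ and $\cV$ are closed and $G$-stable, they are unions of nilpotent orbit closures, so $\X$ contains some orbit $G\cdot e$ with $G\cdot e\not\subseteq\cV$; thus $e$ fails one of the three defining conditions of $\cV$, i.e.\ $e^{[p]}\neq0$, or $e$ is regular in a Levi subalgebra with a factor of type $A_{p-1}\cong\sl_p$, or $G$ is of type $G_2$ with $p=3$ and $e$ is subregular. In each case I would exhibit an $\sl_2$-triple $(e,h,f)$ with $\langle e,h,f\rangle$ not $G$-cr and $f\in\overline{G\cdot e}\subseteq\X$, contradicting that $\X$ is $A_1$-$G$-cr. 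When $e$ is regular in the $\sl_p$-factor of a Levi, there is a choice of triple $(e,h,f)$ for which the natural $\sl_p$-module restricts to $\langle e,h,f\rangle$ as a non-semisimple module --- so this $\sl_2$ lies in a proper parabolic of $\SL_p\subseteq G$ but in no Levi, hence is not $G$-cr (using that a subalgebra of a Levi is $G$-cr if and only if it is cr in that Levi) --- while the associated $f$ degenerates into $\overline{G\cdot e}$. The case $e^{[p]}\neq0$ forces the $\g$-module structure over any $\sl_2$-triple through $e$ to have a composition factor of dimension exceeding $p$, again producing a non-$G$-cr $\sl_2$ with $f\in\overline{G\cdot e}$, and the $G_2$, $p=3$ subregular case is the explicit non-$G$-cr $\sl_2$ of \cite{ST18, SteTh}. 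This proves (B).

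The main obstacle is the existence, through every $[p]$-nilpotent $e\in\cV$, of a $G$-cr $\sl_2$-triple with nilnegative part in $\overline{G\cdot e}$, used in (A): this is exactly the point at which the defining conditions on $\cV$ must be shown to be sharp, and it is where the classification of non-$G$-cr $\sl_2$-subalgebras from \cite{SteTh} and the explicit orbit data of Tables~\ref{tab:closurecVclass} and \ref{tab:closurecVexcep} do the work; the parallel case analysis in (B) is its mirror image for the orbits just outside $\cV$.
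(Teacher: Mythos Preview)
Your outline matches the paper's two-step strategy almost exactly: (A) is the combination of Proposition~\ref{prop:Gcrovergp} with the monogamy of $\cV$, and (B) is the content of Propositions~\ref{prop:classicalmain} and~\ref{P:uniquemax}. The paper streamlines (A) by proving uniformly (Proposition~\ref{prop:Gcrovergp}) that every $e\in\cN_p$ lying in an $\sl_2$-triple already lies in $\Lie(X)$ for some $G$-cr $A_1$-subgroup $X$, so your ``finite check against Table~\ref{tab:closurecVexcep}'' is unnecessary.

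There is, however, a genuine gap in your treatment of (B) in the case $e^{[p]}\neq 0$. The sentence ``forces the $\g$-module structure over any $\sl_2$-triple through $e$ to have a composition factor of dimension exceeding $p$'' is incorrect: irreducible $\sl_2$-modules in characteristic $p$ have dimension at most $p$; what $e^{[p]}\neq 0$ gives you is a Jordan block of $\ad(e)$ of size exceeding $p$, hence that $\langle e,h,f\rangle$ is not a $p$-subalgebra of $\g$. That alone does not force non-$G$-cr. The paper closes this by invoking Lemma~\ref{lem:goodcharpsub}, whose hypothesis requires both $e$ and $f$ to be distinguished in Levi subalgebras \emph{with no factor of type $A_{p-1}$}; verifying this hypothesis is exactly why the paper first reduces (via Lemma~\ref{lem:minneigh} and Table~\ref{tab:minneigh}) to the finite list of minimal neighbours of $\cV$ and inspects each one. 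You also do not say how to produce an $f$ with $f^{[p]}=0$ and $f\in\overline{G\cdot e}$; the paper imports this construction from \cite{GP} in the classical case and from \cite[Section~2.4]{PremetStewart} in the exceptional case. Without these two ingredients --- the Levi hypothesis of Lemma~\ref{lem:goodcharpsub} and the explicit construction of $f$ --- your argument for $e^{[p]}\neq 0$ does not go through.
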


The proof follows very quickly from Theorem \ref{thm:mainKos}; see Section \ref{sec:thmproofs}. 

\begin{remark} 
  It would be interesting to know more about the geometry of the nilpotent variety $\cV$. In type $A$, Donkin \cite{Donk90} showed that the closure of each orbit is normal. Orbit closures in the remaining classical types are considered by Xiao and Shu \cite{XIAO201533}. For exceptional types $G_2, F_4, \ldots, E_8$, results of Thomsen \cite{THOMSEN} show that our varieties $\cV$ are in fact Gorenstein normal varieties with rational singularities as long as $p \geq 5, 11, 7, 11, 13$, respectively. 
\end{remark}

\subsection*{Acknowledgments}
Part of this work contributed to the second author’s PhD thesis at the University of Birmingham, they were supported by the EPSRC during this period. The second author also gratefully acknowledges the financial support of both the LMS and the Heilbronn institute. The third author is supported by a Leverhulme Trust Research Project Grant RPG-2021-080 and the fourth author is supported by an EPSRC grant EP/W000466/1. The authors thank the anonymous referee for their careful reading and numerous suggestions that have improved the paper. For the purpose of open access, the authors have applied a Creative Commons Attribution (CC BY) licence to any Author Accepted Manuscript version arising from this submission.

\section{Preliminaries}

Throughout, $\kk$ is an algebraically closed field of characteristic $p>2$ and $G$ is a simple $\kk$-group with $\g = \Lie(G)$. There is an inherited $[p]$-map on $\g$ and we use $x^{[p]}$ to denote the image of $x \in \g$ under this map.  The variety of all nilpotent elements in $\g$, often called the nilpotent cone, is denoted by $\cN$. The restricted nullcone is the subvariety of $\cN$ consisting of elements $x$ such that $x^{[p]} = 0$ and we denote it by $\cN_p$. The distribution of nilpotent elements among $\sl_2$-subalgebras of $\g$ is insensitive to central isogeny, and so we assume that whenever $G$ is classical, it is one of $\SL(V)$, $\Sp(V)$ or $\SO(V)$ and write $G=\Cl(V)$ for brevity; if $G$ is exceptional, we take it to be simply connected. 

Recall that a prime $p$ is bad for $G$ if $p=2$ and $G$ is of type $B$, $C$ or $D$; if $p\leq 3$ and $G$ is exceptional; or if $p\leq 5$ and $G$ is of type $E_8$; otherwise it is good. In some examples we require a choice of base for the root system associated to $\g$; we use Bourbaki notation \cite{Bourb05}. Finally, we fix a maximal torus $T$ of $G$.

\subsection{Nilpotent orbits and Hasse diagrams} \label{ss:nilporbits}

The orbits for the action of $G$ on $\cN$ are called nilpotent orbits. There are finitely many such and they are classified. In case $G$ is of exceptional type, we describe an orbit $\O=G\cdot x$ by a label indicating a Levi subalgebra in which $e$ is distinguished; for these labels we refer to \cite{LS12}. 

When $G=\Cl(V)$, the classification of orbits in terms of the action on $V$ is well-known and can be found in \cite[Section 1]{Jan04}, but we recap it here for ease of reference. Set $m=\dim V$. If $G=\SL(V)$, orbits are parameterised by partitions of $m$ according to the Jordan decomposition of their elements' actions on $V$; we write $x\sim (\lambda_1,\dots,\lambda_r)$ where $\lambda_1\geq \dots\geq \lambda_r$ is the partition of $m$ corresponding to $x$. In types $B$ and $C$ orbits are parameterised by partitions of $m$ with an even number of even parts and an even number of odd parts, respectively. In type $D$ it is slightly more complicated. A partition is called very even if it only has even parts and they all occur with even multiplicity. There is one orbit for each partition of $m$ with an even number of even parts that is not very even; and two orbits for each very even partition of $m$. 

To check that $\cV$ is a closed subvariety of $\cN$ we require information about the Hasse diagrams for the closure relation on nilpotent orbits. For classical types, apart from type $D$, the closure order on orbits is precisely the dominance order on partitions. In type $D$ we start with the Hasse diagram for the dominance order on partitions with an even number of even parts. Then we replace each very even partition $\lambda$ with two nodes $\lambda_1, \lambda_2$ and replace each edge from $\lambda$ to $\mu$ with two edges from $\lambda_i$ to $\mu$. For exceptional types the picture is actually incomplete in general. But if $p$ is good for $G$, the existence of Springer morphisms implies that the Hasse diagrams remain the same as those in characteristic $0$; \cite[Th\`eor\'eme III 5.2]{Spa82}. These can be found in \cite[pp.247--250]{Spa82} and are reproduced in \cite[Section~13.4]{Car93} with labels closer to those in \cite{LS12}. However, those in \cite{Car93} are missing edges in the $E_6, E_7$ and $E_8$ diagrams. Specifically, there should be an edge between the following pairs of labels:   

\hspace{1cm} $E_6$:  $(D_4(a_1), A_3)$, 

\hspace{1cm} $E_7$: $(D_6(a_2), D_5(a_1) + A_1)$, $(D_5(a_1), D_4)$, $(D_4(a_1), 2A_2+A_1)$, $(D_4(a_1), A_2+3A_1)$, 

\hspace{1cm} $E_8$: ($E_6 + A_1$, $E_8(b_6)$), $(E_8(a_7), D_6(a_2))$,  $(A_3+A_1$, $A_3$). 

In bad characteristic, there are not even the same number of nilpotent and unipotent orbits; for certain bad primes there are more nilpotent orbits than in characteristic $0$. The Hasse diagram for $G_2$ when $p=3$ can be deduced from \cite{Stu71} and is reproduced in Figure \ref{G2Hassediagram}. For the remaining types we will have to work harder to obtain partial information about the closure relations. 

\begin{figure}
\begin{center}
\begin{tikzpicture} [scale=0.5]
  \node (max) at (0,4) {$G_2$};
  \node (a) at (0,2) {$G_2(a_1)$};
  \node (b) at (0,0) {$(\tilde{A_1})_{(3)}$};
  \node (c) at (-2,-2) {$\tilde{A_1}$};
  \node (d) at (2,-2) {$A_1$};
  \node (min) at (0,-4) {$0$};
  \draw (min) -- (d) -- (b) -- (a) -- (max);
  \draw (min) -- (c) -- (b);
\end{tikzpicture}

\caption{Full Hasse diagram for $G_2$ when $p=3$.  \label{G2Hassediagram}}
\end{center}
\end{figure}

We can now prove part of Theorem \ref{thm:mainKos}. 

\begin{lemma} \label{lem:cVisclosed}
The subset $\cV \subseteq \cN$ is a closed $G$-stable subvariety; moreover, it is the closure of a single orbit in each case, as specified in Tables \ref{tab:closurecVclass} and \ref{tab:closurecVexcep}.  
\end{lemma}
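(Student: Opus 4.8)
The plan is to analyse the defining conditions of $\cV$ one at a time and show that each cuts out a closed $G$-stable set, then identify the intersection as a single orbit closure. First I would recall that the restricted nullcone $\cN_p = \{x : x^{[p]} = 0\}$ is a closed $G$-stable subvariety of $\cN$ — it is a union of nilpotent orbit closures, since the orbits it contains are exactly those $\cO$ with $\overline{\cO} \subseteq \cN_p$ (the $[p]$-map condition is closed and $G$-equivariant). So the first condition already carves out a closed $G$-stable set. The remaining two conditions are \emph{exclusions} of specific orbits: elements that are regular in a Levi of type with an $A_{p-1}$ factor, and (when $G = G_2$, $p=3$) the subregular orbit. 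To see that removing these still leaves a closed set, I would argue that in the Hasse diagram of $\cN_p$ the excluded orbits are precisely the maximal elements one must delete — i.e. after removing them the remaining orbits still form a \emph{downward-closed} subset of the poset, hence a closed subvariety, and moreover that what remains has a unique maximal element.

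The main work is the case-by-case verification using the Hasse diagrams recalled above. For classical $G = \Cl(V)$ I would use the partition description: $\cN_p$ consists of the partitions with all parts $\leq p$ (with the appropriate parity constraints in types $B,C,D$); being regular in a Levi with an $A_{p-1}$ factor corresponds to a partition having a part equal to $p$ occurring in a controlled way, and I would check directly, using that the closure order is dominance order (with the type $D$ modification recalled above), that deleting exactly these partitions leaves a dominance-order ideal with a unique top partition — this top partition then labels the dense orbit, and I would record it in Table \ref{tab:closurecVclass}. For exceptional $G$ I would go through $G_2, F_4, E_6, E_7, E_8$ for each relevant small prime $p \in \{3,5,7\}$ (and note that for $p > h(G)$ we simply get $\cV = \cN$, consistent with \cite{ST18}), reading off from the characteristic-$0$ Hasse diagrams of \cite[Section 13.4]{Car93} — augmented by Premet's Theorem \ref{thm:Premet}, the two missing $E_8$ edges, and the split $G_2$ diagram of Figure \ref{G2Hassediagram} — which orbits lie in $\cN_p$, which of those are regular in a Levi with an $A_{p-1}$ factor, and checking in each case that the maximal such excluded orbits, together with the $G_2$ subregular exclusion when $p=3$, are exactly the orbits whose removal from the poset $\cN_p$ leaves an ideal with a unique maximal element; that element's label goes in Table \ref{tab:closurecVexcep}.

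The step I expect to be the main obstacle is the bad-characteristic bookkeeping for $G_2$ and $E_8$ at $p=3$, where the Hesselink strata split: here the poset of \emph{orbits} (not strata) is what matters, so I must use the explicit split diagrams (Figure \ref{G2Hassediagram} for $G_2$, and the partial $E_8$ diagram together with Premet's theorem and the Spaltenstein edges for $E_8$) rather than the characteristic-$0$ picture, and verify that the extra edges do not destroy the "unique maximal element after deletion" property. A secondary subtlety is checking that "regular in a Levi with an $A_{p-1}$ factor" is a genuinely $G$-stable, well-defined union of orbits independent of the choice of Levi, which follows since Levi subalgebras of a given type are all $G$-conjugate and the regular orbit in a given Levi is intrinsic to it; once that is in hand the closedness of $\cV$ reduces entirely to the combinatorial ideal check above.
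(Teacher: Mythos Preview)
Your approach is essentially that of the paper: treat classical groups via the partition/dominance description (the paper makes explicit that ``no $A_{p-1}$ Levi factor'' means no part equal to $p$ in types $A,C$ and at most one part equal to $p$ in types $B,D$, and notes that the top partition is never very even in type $D$), and treat exceptional groups by reading off $\cN_p$ from tables and inspecting the Hasse diagrams, invoking Premet's theorem for the bad-prime cases. One correction: restricting to $p\in\{3,5,7\}$ is wrong --- you need all primes $p<h(G)$ (so up to $p=29$ for $E_8$; see Table~\ref{tab:closurecVexcep}), and for these you will want an external source such as \cite{SteMin} to determine which exceptional orbits lie in $\cN_p$ rather than computing $e^{[p]}$ yourself.
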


\begin{table}
\centering
\begin{tabular}{| c | c | c  c |}  
 \hline
$G$ & $m$ & $\lambda$ &  \\ 
\hline\hline
$A_{m-1}$ & $a(p-1)+r$  & $((p-1)^a,r)$ &  \\
\hline
$B_{\frac{m-1}{2}}$ & $p + a(p-1)+r$ $(r > 0)$ & $(p,(p-1)^{a},r-1,1)$ & $a$ even \\
        & & $(p,(p-1)^{a-1},p-2,r+1)$ & $a$ odd \\
        & $p + a(p-1)$ & $(p,(p-1)^{a})$ &  $a$ even \\
        &  & $(p,(p-1)^{a-1}, p-2,1)$ & $a$ odd \\
        & $\leq p$ & $(m)$ & \\
\hline
$C_{\frac{m}{2}}$ & $a(p-1)+r$  & $((p-1)^a,r)$ & \\
\hline     
$D_{\frac{m}{2}}$  & $p + a(p-1) + r$ & $(p,(p-1)^a,r)$ & $a$ even \\
        & & $(p,(p-1)^{a-1},p-2,r,1)$ & $a$ odd \\
     & $\leq p$  & $(m-1,1)$ &  \\
 \hline
\end{tabular}
\caption{Partition $\lambda$ corresponding to the orbit $\O_{\lambda}$ such that $\cV = \bar{\O}_{\lambda}$ in the classical types, where $a \geq 0$ and $0 \leq r < p-1$.   \label{tab:closurecVclass}}

\end{table} 

\begin{table}
\centering
\begin{tabular}{| c | c | c || c | c | c || c | c | c || c | c | c |}  
 \hline
  $G$ & $p$ & $\O$ &  $G$ & $p$ & $\O$ &  $G$ & $p$ & $\O$ &  $G$ & $p$ & $\O$ \\ 
 \hline\hline
 $G_2$ & $3$ & $\tilde{A}_1^{(3)}$ &  $E_6$ & $3$ &  $A_1^3$  & $E_7$ &  $3$ & $A_1^4$  & $E_8$ & $3$ & $A_1^4$  \\ 
  & $5$ & $G_2(a_1)$ & & $5$ & $D_4(a_1)$ & & $5$ &  $A_3A_2A_1$ & & $5$ & $A_3^2$\\
  & $\geq 7$ & $G_2$ & & $7$ & $E_6(a_3)$& & $7$ & $E_7(a_5)$ & & $7$ & $E_8(a_7)$ \\
  $F_4$ & $3$ & $A_1 \tilde{A_1}$ & & $11$ & $E_6(a_1)$  & &  $11$ & $E_7(a_3)$ & & $11$ & $E_8(a_6)$\\
  & $5$ & $F_4(a_3)$  & & $\geq 13$ & $E_6$ & & $13$ &  $E_7(a_2)$ & & $13$ & $E_8(a_5)$  \\
& $7$ & $F_4(a_2)$ & & & & &  $17$ &  $E_7(a_1)$ & & $17$ &  $E_8(a_4)$ \\
 & $11$ & $F_4(a_1)$ & & & & &$\geq 19$ & $E_7$& & $19$ & $E_8(a_3)$ \\
  & $\geq 13$ & $F_4$ & & & & & & & & $23$ & $E_8(a_2)$ \\
   & & & & & & & & & & $29$ &  $E_8(a_1)$ \\
   & & & & & & & & & & $ \geq 31$ &  $E_8$ \\

 \hline
\end{tabular}
\caption{Orbit $\O$ such that $\cV = \bar{\O}$ in the exceptional types. \label{tab:closurecVexcep}}

\end{table}

\begin{table}
\centering
\begin{tabular}{| c || c  c  c  c  c  c  c  c  |}  
 \hline
$\O$ &  $A_3^2$ &  $D_4(a_1) A_2$ & $A_3 A_2 A_1$ & $A_3A_2$ & $D_4(a_1) A_1$ & $ D_4(a_1)$ & $A_3 A_1^2$ & $A_2^2 A_1^2$ \\

 $\lambda$ & $(5,4^2,1^3) $  & $(5,3^3,1^2)$ & $ (5,3^2,2^2,1)$ & $(5,3^2,1^5)$ & $(5,3,2^2,1^4)$ & $(5,3,1^8)$ & $(5,2^4,1^3)$ & $(3^5,1)$  \\
\hline 
\hline
 $\O$ & $A_3A_1$ & $A_2^2 A_1$ & $A_3$ & $A_2^2$ & $A_2 A_1^3$  & $A_2 A_1^2$ & $A_2 A_1$ & $A_2$\\
 
 $\lambda$ & $(5,2^2,1^7)$ & $(3^4,2^2)$ & $(5,1^{11})$ & $(3^4,1^4)$ & $(3^3,2^2,1^3)$ & $(3^3,1^7)$ & $(3^2,2^2,1^6)$ & $(3^2, 1^{10})$ \\
 \hline 
\hline
$\O$ & $A_1^4$ & $A_1^3$  & $A_1^2$ & $A_1$ & & & &  \\

 $\lambda$ & $(3,2^4, 1^5)$ & $(2^6,1^4)$ & $(2^4,1^{8})$ & $(2^2,1^{12})$ & & & &  \\

\hline

\end{tabular}
\caption{$D_8$ partitions for nilpotent orbits in $\cV$ for $E_8$, $p=5$   \label{tab:Gp3classes}}

\end{table} 
\begin{proof}
Suppose $G=\Cl(V)$ with $\dim V=m$. An orbit corresponding to a partition $\lambda$ of $m$ is contained in the restricted nullcone if and only if the largest part of $\lambda$ is at most $p$. 
Let $G=\SL(V)$ or $\Sp(V)$ (resp. $\SO(V)$), and let $x \in \cN$ with partition represented by $\lambda$. Then $x$ is not regular in a Levi subalgebra with a factor of type $A_{p-1}$ precisely when $\lambda$ contains no parts of size $p$ (resp. at most one part of size $p$). Now every orbit represented in Table \ref{tab:closurecVclass} represents a single orbit in $\cV$: for $G$ of type $D$, each $\lambda$ given in Table \ref{tab:closurecVclass} is not very even. Observe that any other orbit in $\cV$ is represented by a partition lower than $\lambda$ in the dominance ordering, and hence is contained in $\bar{\O}_{\lambda}$; and vice-versa, by definition of $\cV$.

Now suppose $G$ is of exceptional type. We use the tables in the corrected arxiv version of \cite{SteMin} to determine the orbits in the restricted nullcone.  A nilpotent element $x$ is regular in a Levi subalgebra with a factor of type $A_{p-1}$ exactly when the labelling of its orbit contains an $A_{p-1}$ part. Thus in good characteristic, as well as for $G$ of type $G_2$, the result then follows simply by inspecting the Hasse diagrams. 

In the remaining cases we use case-by-case analysis. First let $G$ be of type $E_8$ and $p=5$. Note that every class is distinguished in $\Lie(L)$ for $L$ some Levi subgroup of $G$. Moreover, the Levi subgroups in question are all conjugate to subgroups of $M$, a maximal subgroup of $G$ of type $D_8$. Let $V$ be the $16$-dimensional standard module for $M$. For each non-trivial class in $\cV$ we choose a representative $e$ in $\Lie(M)$ and calculate the Jordan block sizes for the action of $e$ on $V$; these are in Table \ref{tab:Gp3classes}. Note that for some classes there are many non-$M$-conjugate choices for $e$. For example, there are three non-$M$-conjugate Levi subgroups of $M$ of type $A_3^2$; these correspond to the subsets of simple roots $\{1,2,3,5,6,7\}$, $\{1,2,3,5,6,8\}$ and $\{1,2,3,6,7,8\}$. A regular nilpotent element of the corresponding Levi subalgebras will act on $V$ with Jordan blocks of sizes $(4^4)$, $(4^4)$ and $(5,4^2,1^3)$, respectively. 

Note that the final partition is higher in the dominance order than all other partitions in Table \ref{tab:Gp3classes}. Therefore, the closure of the $M$-orbit of a representative of the class $A_3^2$ contains a representative of every class in $\cV$. It remains to prove that there are no more $G$-classes in the closure of the $A_3^2$ class. By \cite[Table~10]{SteMin}, the Jordan block sizes for the adjoint action of nilpotent elements in the $A_3^2$-class are $(5^{38},4^{12},1^{10})$. By embedding $G$ into $\SL_{248}$, it follows that the Jordan block sizes for the adjoint action of every nilpotent element in the closure of the $A_3^2$-class will be lower than $(5^{38},4^{12},1^{10})$ in the dominance order. Using \textit{loc. cit.}, we check that every non-restricted class has a Jordan block of size greater than $5$ and all remaining classes (which have labels with an $A_4$ part) have at least 45 blocks of size $5$. 

Now let $p=3$. When $G$ is of type $F_4$, the subset $\cV$ consists of the zero element and the union of the three classes with labels $A_1$, $\tilde{A}_1$ and $A_1 \tilde{A}_1$. All three non-trivial classes have representatives contained in $\Lie(M)$ where $M$ is a subgroup of type $B_3$. We may choose these representatives so that the corresponding partitions of $7$ are $(2^2,1^3)$, $(3,1^4)$ and $(3,2^2)$, respectively. Therefore, all three classes are contained in the closure of the $A_1 \tilde{A}_1$-class. By \cite[Table~22.1.4]{LS12}, the three classes in $\cV$ for $G$ of type $E_6$ (which are $A_1, A_1^2$ and $A_1^3)$ are all contained in an $F_4$-subalgebra. Therefore the closure of the $A_1^3$-class contains all three classes. 

When $G$ is of type $E_7$, the non-zero elements of $\cV$ consist of the union of the five classes with labels $A_1$, $A_1^2$, $(A_1^3)^{(1)}$, $(A_1^3)^{(2)}$ and $A_1^4$. All such classes have representatives contained in $\Lie(M)$ where $M$ is a subgroup of type $D_6$. We may choose these representatives so that the corresponding partitions of $12$ are $(2^2,1^8)$, $(3,1^9)$, $(2^6)$, $(3,2^2,1^5)$ and $(3,2^4,1)$, respectively. Thus, all the classes in $\cV$ are contained in the closure of the $A_1^4$-class. The discussion in \cite[Section~16.1.2]{LS12} shows that the four non-trivial classes in $\cV$ for $G$ of type $E_8$ (which are $A_1$, $A_1^2$, $A_1^3$ and $A_1^4$) are contained in an $E_7$-subalgebra. Thus the closure of the $A_1^4$-class contains all classes in $\cV$.  

A final routine use of the tables in \cite{SteMin} allows us to complete the proof. For example, when $G$ is of type $E_7$ the Jordan block sizes for the adjoint action of a nilpotent element in the $A_1^4$-class are $(3^{28},2^{14},1^{21})$. Every non-restricted class has a block of size greater than $3$ and all other remaining classes have at least 33 blocks of size $3$.    
\end{proof}

\subsection{\texorpdfstring{$G$}{G}-cr subalgebras}

\begin{Proposition} \label{prop:Gcrovergp}
Suppose $e \in \cN_p$. If $e$ is contained in an $\sl_2$-triple then there exists a $G$-cr subgroup $X \leq G$ of type $A_1$ such that $\Lie(X)$ contains $e$.   
\end{Proposition}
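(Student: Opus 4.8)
The plan is to produce the required $G$-cr subgroup of type $A_1$ via an $\mathrm{SL}_2$-representation-theoretic argument at the level of the group, exploiting the hypothesis $e^{[p]}=0$. First I would recall that an $\sl_2$-triple $(e,h,f)$ in $\g$ with $e^{[p]}=0$ integrates: since $e$ is nilpotent and $p$-nilpotent, $\exp(\ad e)$ makes sense and one can attempt to build a homomorphism $\phi\colon \mathrm{SL}_2 \to G$ with $d\phi$ sending the standard triple to $(e,h,f)$, provided the triple is ``good'' — this is essentially the content of the existence part of Jacobson--Morozov in good/large-enough characteristic, but here we only have $e \in \cN_p$, so some care is needed. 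The cleanest route is to use the cocharacter $\tau\colon \G_m \to G$ associated to $h$ (an associated cocharacter for $e$), grading $\g = \bigoplus_i \g(i)$, with $e \in \g(2)$, and then to check that the subalgebra $\langle e,h,f\rangle$ exponentiates to an $A_1$-subgroup $X = \langle U_e, U_f, \tau(\G_m)\rangle$.

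Next I would address $G$-complete reducibility of $X$. The key input is the theory relating $G$-cr subgroups to associated cocharacters and to the optimal torus / Kempf--Rousseau--Hesselink machinery: if $X$ is not $G$-cr, then $X$ lies in a proper parabolic $P$ but in no Levi of $P$, and one can take $P$ to be the instability parabolic $P(\lambda)$ for a suitable cocharacter $\lambda$ destabilising (a point associated to) $X$. I would then use the structure of $e$ inside such a parabolic: the condition defining $\cV$ — that $e$ is not regular in a Levi factor of type $A_{p-1}$, and (for $G_2$, $p=3$) not subregular — is precisely what obstructs the pathological embeddings, so the argument should run by contradiction. Concretely, I expect the non-$G$-cr case forces $e$ to be, up to the $L_P$-action, a regular nilpotent in a Levi factor isomorphic to $\sl_{p}$ (where $x^{[p]}=0$ but the triple ``wraps around''), or the $G_2(a_1)$/subregular exception; but since here $e \in \cN_p$ is merely assumed to lie in *some* $\sl_2$-triple (no $\cV$-membership assumed), I would instead argue directly that whenever $e$ lies in an $\sl_2$-triple at all, one can *choose* the triple (equivalently, choose the $A_1$-overgroup) to be $G$-cr — using that $G$-cr-ness of $A_1$-subgroups is controlled by the restriction of $\g$ (or a faithful module) to $X$ being a tilting/semisimple-enough module, which holds once the relevant weights are $< p$ or the module has no ``bad'' extensions. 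For classical $G = \Cl(V)$ this is a hands-on check on the partition and the action of $X$ on $V$; for exceptional $G$ one invokes the classification of $A_1$-subgroups and their $G$-cr-ness (Lawther--Testerman, Liebeck--Seitz, Stewart).

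The main obstacle, I expect, is the passage from the Lie algebra $\sl_2$-triple to a genuine $G$-cr *group* homomorphism $\mathrm{SL}_2 \to G$: in small characteristic an $\sl_2$-triple need not integrate to $\mathrm{SL}_2$ (it may only give $\mathrm{PGL}_2$, or nothing), and even when it integrates the resulting $A_1$ can fail to be $G$-cr. So the heart of the proof is (i) showing such a triple with $e\in\cN_p$ always has an integrating $A_1$ at all — here the hypothesis $e^{[p]}=0$ is exactly what is needed to exponentiate $e$ and $f$ — and (ii) showing that among the possibly several $C_G(e)$-classes of completions, at least one yields a $G$-cr $X$; for this I would lean on the fact (used throughout this subject, e.g.\ in \cite{McN07,SteTh}) that an $A_1$-subgroup of $G$ is $G$-cr if and only if it is not contained in the unipotent radical of a proper parabolic up to conjugacy, combined with a weight/dimension count on $\g$ restricted to $X$ forcing $X$ into a Levi when $e\in\cN_p$. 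I would conclude by matching the outcome against the orbit data, noting that the only genuine failures of this argument are exactly the orbits excluded from $\cV$, which is consistent with — and in fact a step towards — Theorem \ref{thm:mainKos}.
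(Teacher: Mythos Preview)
Your plan has a genuine structural gap. You propose to (i) integrate the given $\sl_2$-triple $(e,h,f)$ to an $A_1$-subgroup $X$, and then (ii) argue that $X$ is $G$-cr, or else switch to a better triple. Both steps are problematic. For (i), the hypothesis $e^{[p]}=0$ does not force $f^{[p]}=0$, so you cannot simply exponentiate $f$; and in bad characteristic the associated-cocharacter machinery you invoke is not available in the form you need. For (ii), the triple you start with may well integrate to a non-$G$-cr $A_1$---this is precisely what the baby-Verma construction in Proposition~\ref{P:SLpE} produces---and your appeal to ``tilting/semisimple-enough'' restrictions or to a ``weight/dimension count forcing $X$ into a Levi'' is a hope, not an argument. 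You also misread the hypothesis: the proposition assumes only $e\in\cN_p$, not $e\in\cV$. In particular it must (and does) apply to $e$ regular in an $A_{p-1}$ Levi factor, which lies in $\cN_p\setminus\cV$; so your closing claim that ``the only genuine failures are exactly the orbits excluded from $\cV$'' is false for this proposition.

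The paper never attempts to integrate a given triple. It constructs a $G$-cr $X$ with $e\in\Lie(X)$ directly, case by case. For $G=\SL(V)$, the condition $e^{[p]}=0$ makes $e$ regular in a Levi of type $A_{r_1}\times\cdots\times A_{r_s}$ with each $r_j\le p-1$, and one takes $X=\SL_2$ embedded via the completely reducible module $L(r_1)\oplus\cdots\oplus L(r_s)$. For $G$ not of type $A$ and $p$ good (hence very good), the result is quoted from \cite[Propositions~33, 52]{McN05}. For $G$ exceptional and $p$ bad, one lists the restricted orbits from \cite{SteMin}: apart from $G_2(a_1)$ in $G_2$ at $p=3$, every such orbit is distinguished in a Levi $L$ whose simple factors are of type $A_r$ with $r<p$ or $D_4$, and Serre's reduction ($G$-cr $\Leftrightarrow$ $L$-cr) recycles the classical/good-characteristic cases already handled. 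The $G_2(a_1)$ case is done by hand via an $A_2$-irreducible $A_1$ in the $A_2$ subsystem, using that all simple subgroups of $G_2$ are $G_2$-cr when $p=3$. The moral is that rather than integrating an arbitrary triple and then wrestling with its $G$-cr status, one should locate $e$ in a small Levi where the problem is already solved and build $X$ there.
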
 

\begin{proof}
If $G=\SL(V)$ then $e^{[p]}=0$ implies $e$ has Jordan blocks of size at most $p$, which means $e$ is regular in a Levi subalgebra of type $A_{r_1}\times\dots\times A_{r_i}$ with each $r_i\leq p-1$. The image of $X=\SL_2$ under the completely reducible representation given by $L(r_1)\oplus\dots\oplus L(r_i)$ satisfies the demands of the theorem, where $r_j$ now represents a (restricted) high weight. So assume $G$ is not of type $A$. Then if $p$ is good for $G$, it is very good, and the result follows from \cite[Proposition~33,Theorem 52]{McN05}. 

So we may assume $p$ is bad, and therefore that $G$ is exceptional. As before, the orbits of $\cN_p$ can be worked out from the tables in \cite{SteMin} and there are not very many. By inspection, it follows that the label of every restricted nilpotent class is denoted by sums of $A_{r}$ for $r < p$ and $D_4(a_1)$ if $G = E_8$, $p=5$ or is $G_2(a_1)$ when $G= G_2$, $p=3$; note that the class $(\tilde{A}_1)_{(3)}$ is excluded since it is not contained in an $\sl_2$-triple. 

We first deal with the final case. The subsystem subgroup $A_2 < G_2$ contains an $A_2$-irreducible subgroup $X$ of type $A_1$. By \cite[Theorem~1]{SteG2}, all simple subgroups of $G_2$ are $G_2$-cr when $p=3$. The restriction of the nontrivial $7$-dimensional $G_2$-module to $X$ is $L(2)^2 + L(0)$. It follows that the nilpotent elements contained in $\Lie(X)$ have Jordan blocks of size $(3^2,1)$ and thus are in the $G_2(a_1)$ class by \cite[Table~4]{SteMin}. 

In the remaining cases, every class is a distinguished element in $\l = \Lie(L)$ for some Levi subgroup $L$ with simple factors only of type $A_r$ with $r < p$ or $D_4$. By \cite[Proposition 3.2]{Ser05}, a subgroup $X$ of $L$ is $G$-cr if and only if it is $L$-cr. Furthermore a subgroup $X$ of a central product $L = L_1 L_2$ is $L$-cr if and only if the projection of $X$ to both $L_1$ and $L_2$ is $L$-cr. Therefore, it suffices to deal with the cases where $L$ is simple and simply connected of type $A_r$ $(r <p)$ or $D_4$--but these cases have already been tackled.  
\end{proof}

If $X$ is $G$-cr then so is $\Lie(X)$ by \cite[Theorem~1]{McN07}; so we get the following. 

\begin{cor} \label{cor:Gcrsubalgebra}
Suppose $e \in \cN_p$. If $e$ is contained in an $\sl_2$-triple then there exists a $G$-cr subalgebra $\s\cong\sl_2$ of $\g$ containing $e$.     
\end{cor}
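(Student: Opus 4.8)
The plan is to deduce this directly from Proposition~\ref{prop:Gcrovergp} together with McNinch's transfer principle. Given $e \in \cN_p$ lying in an $\sl_2$-triple, Proposition~\ref{prop:Gcrovergp} furnishes a $G$-cr subgroup $X \leq G$ of type $A_1$ with $e \in \Lie(X)$. By \cite[Theorem~1]{McN07}, since $X$ is $G$-cr so is $\s := \Lie(X)$, and this $\s$ is the required subalgebra. (Note the implicit hypothesis that $e$ be contained in an $\sl_2$-triple: if $e \in \s \cong \sl_2$ then, as $p \neq 2$, $e$ may be completed to a standard triple inside $\s$, so the conclusion can only hold for such $e$; the class $(\tilde A_1)_{(3)}$ in $G_2$ with $p=3$ shows that not every element of $\cN_p$ qualifies.)

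The one point requiring a remark is that $\s \cong \sl_2$ rather than merely being the Lie algebra of a rank-one group. The subgroup $X$ produced in the proof of Proposition~\ref{prop:Gcrovergp} is an image of $\SL_2$ under a (possibly non-faithful) completely reducible representation, so a priori $X$ is isomorphic to $\SL_2$ or to $\PGL_2$. In either case, since $p > 2$ we have $\gl_2 = \sl_2 \oplus \kk \cdot \mathrm{id}$, whence $\pgl_2 \cong \sl_2$, and therefore $\Lie(X) \cong \sl_2$; one should also record that \cite[Theorem~1]{McN07} is valid for arbitrary $p$ and arbitrary reductive $G$, so there is no characteristic restriction to worry about here.

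I do not expect any genuine obstacle: the corollary is immediate from the cited proposition and theorem, and the only work is the bookkeeping above (isogeny type of $X$ and identification of its Lie algebra). If one prefers to avoid even the implicit hypothesis, one can phrase the statement as applying to those $e \in \cN_p$ that occur in an $\sl_2$-triple, which is exactly the hypothesis of Proposition~\ref{prop:Gcrovergp} and is the form in which the corollary will be used.
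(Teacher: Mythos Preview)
Your proposal is correct and follows exactly the paper's approach: the paper deduces the corollary in one line from Proposition~\ref{prop:Gcrovergp} together with \cite[Theorem~1]{McN07} (``if $X$ is $G$-cr then so is $\Lie(X)$''). Your additional remarks---on the implicit hypothesis that $e$ lie in an $\sl_2$-triple, and on why $\Lie(X)\cong\sl_2$ regardless of isogeny type---are correct clarifications that the paper leaves unstated.
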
 

The following is used a couple of times, and is \cite[Lemma 4]{McN07}.

\begin{lemma} \label{L:GcrLcr}
Let $L$ be a Levi factor of a parabolic subgroup of $G$. Suppose that we have a Lie subalgebra $\s \subset \l = \Lie(L)$. Then $\s$ is $G$-cr if and only if $\s$ is $L$-cr.
\end{lemma}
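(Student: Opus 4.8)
This is the Lie-algebra counterpart of Serre's \cite[Proposition~3.2]{Ser05}, and the plan is to follow that proof, working throughout with the cocharacter description of parabolic subalgebras: every parabolic subalgebra of $\g$ has the form $\p_\lambda = \Lie(P_\lambda)$ for a cocharacter $\lambda\colon\G_m\to G$, where $P_\lambda=\{g\in G:\lim_{t\to0}\lambda(t)g\lambda(t)^{-1}\text{ exists}\}$, and the Levi subalgebras of $\p_\lambda$ are precisely the $\Ad(u)\l_\lambda$ with $u\in R_u(P_\lambda)$ and $\l_\lambda=\Lie(C_G(\lambda(\G_m)))$. Fix a parabolic subgroup $Q$ of $G$ with Levi factor $L$, and set $\q=\Lie(Q)$, $\u=\Lie(R_u(Q))$, so that $\q=\l\oplus\u$, the group $\Ad(R_u(Q))$ acts trivially on $\q/\u\cong\l$, and $\Ad(L)$ preserves $\u$.

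\emph{Forward direction ($G$-cr $\Rightarrow$ $L$-cr).} Let $\p=\Lie(P)$ be a parabolic subalgebra of $\l$ with $\s\subseteq\p$ (so $P\le L$), and put $\tilde\p:=\p+\u=\Lie\bigl(P\,R_u(Q)\bigr)$; this is a parabolic subalgebra of $\g$ with nilradical $\Lie(R_u(P))\oplus\u$, so any Levi subalgebra $\m_0$ of $\p$ is also a Levi subalgebra of $\tilde\p$, since $\tilde\p=\m_0\oplus\Lie(R_u(P))\oplus\u$. As $\s$ is $G$-cr, $\s$ lies in some Levi subalgebra of $\tilde\p$, which we may write as $\Ad(u)\m_0$ with $u=u_1u_2$, $u_1\in R_u(P)\le L$ and $u_2\in R_u(Q)$. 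For $s\in\s$, write $s=\Ad(u_1)\Ad(u_2)x$ with $x\in\m_0\subseteq\l$; then $\Ad(u_2)x=x+n$ with $n\in\u$, whence $s=\Ad(u_1)x+\Ad(u_1)n$ with $\Ad(u_1)x\in\l$ and $\Ad(u_1)n\in\u$. Since $s\in\l$ and $\l\cap\u=0$, we get $n=0$ and $s=\Ad(u_1)x\in\Ad(u_1)\m_0$. As $u_1\in P$ normalises $\p$, the subalgebra $\Ad(u_1)\m_0$ is a Levi subalgebra of $\p$ containing $\s$; since $\p$ was an arbitrary parabolic subalgebra of $\l$ containing $\s$, this shows $\s$ is $L$-cr.

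\emph{Backward direction ($L$-cr $\Rightarrow$ $G$-cr).} Fix a cocharacter $\tau$ of $G$ with $L=C_G(\tau(\G_m))$, so $\s\subseteq\l=\l_\tau$, and let $\lambda$ be any cocharacter of $G$ with $\s\subseteq\p_\lambda$; we must find $u\in R_u(P_\lambda)$ with $\Ad(u)\s\subseteq\l_\lambda$. The crux is a compatibility step: after replacing $\lambda$ by a suitable $R_u(P_\lambda)$-conjugate — which changes neither $P_\lambda$ nor the containment $\s\subseteq\p_\lambda$, so that it suffices to treat the new $\lambda$ — one arranges that $\lambda$ commutes with $\tau$, equivalently that $\lambda$ is a cocharacter of $L$. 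Granting this, $P_\lambda\cap L$ is a parabolic subgroup of $L$ with $\Lie(P_\lambda\cap L)=\p_\lambda\cap\l$, this parabolic subalgebra of $\l$ contains $\s$, and $\l_\lambda\cap\l=C_{\l}(\lambda)$ is a Levi subalgebra of it. By $L$-complete reducibility, $\s$ lies in a Levi subalgebra of $\p_\lambda\cap\l$; these are the $\Ad(w)(\l_\lambda\cap\l)$ for $w\in R_u(P_\lambda\cap L)\subseteq R_u(P_\lambda)$, so $\s\subseteq\Ad(w)(\l_\lambda\cap\l)\subseteq\Ad(w)\l_\lambda$, and since $w\in P_\lambda$ normalises $\p_\lambda$, this $\Ad(w)\l_\lambda$ is a Levi subalgebra of $\p_\lambda$ containing $\s$. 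Hence $\s$ is $G$-cr.

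The main obstacle is the compatibility step in the last paragraph — arranging, after conjugating $\lambda$ within $R_u(P_\lambda)$, that $\s$ lies in a Levi subalgebra of $\p_\lambda$, equivalently that $\p_\lambda\cap\l$ is a parabolic subalgebra of $\l$. This cannot be done with an arbitrary subalgebra of $\p_\lambda$ in place of $\s$ (non-$G$-cr subalgebras exist), so the hypothesis $\s\subseteq\l$ must be used in an essential way; this step is the Lie-algebra translation of the corresponding part of Serre's proof of \cite[Proposition~3.2]{Ser05}, and is precisely what \cite[Lemma~4]{McN07} supplies. Everything else is routine bookkeeping with the cocharacter description of parabolic and Levi subalgebras.
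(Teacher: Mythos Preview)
The paper does not give its own proof of this lemma; it simply records it as \cite[Lemma~4]{McN07}. Your write-up is a reasonable outline of how Serre's group-theoretic argument \cite[Proposition~3.2]{Ser05} transports to the Lie-algebra setting, and in that sense it is consistent with the paper's approach, since both ultimately rest on McNinch.

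That said, as a standalone proof your proposal is circular. In the final paragraph you invoke \cite[Lemma~4]{McN07} to supply the ``compatibility step'', but \cite[Lemma~4]{McN07} \emph{is} the statement you are proving, so you cannot cite it for an internal step. Your description of that step is also garbled: you write ``arranging \ldots that $\s$ lies in a Levi subalgebra of $\p_\lambda$'', which is literally the conclusion of the backward direction, not an intermediate reduction. What you actually need --- and stated correctly in the preceding paragraph --- is to show that, after an $R_u(P_\lambda)$-conjugation of $\lambda$, one may take $\lambda$ to be a cocharacter of $L$ (equivalently, that $\p_\lambda\cap\l$ is a parabolic subalgebra of $\l$ containing $\s$). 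That is the substantive part of McNinch's argument, and it genuinely uses the hypothesis $\s\subseteq\l$; your sketch does not establish it. If you want a self-contained proof rather than a citation, you must carry this step out (for instance, following McNinch's argument using the cocharacter description of parabolics and the fact that $Q\cap P_\lambda$ contains a maximal torus of $G$, so that one may choose $\lambda$ and $\tau$ in a common maximal torus).
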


\begin{Proposition}\label{P:SLpE}
Suppose $e\in \cN$ is distinguished in a Levi subalgebra $\l=\Lie(L)$ with a factor of type $A_{p-1}$. Then there is an $\sl_2$-triple $(e,h,f)$ such that $\s: = \langle e,h,f \rangle$ is non-$G$-cr and $f\in \overline{L\cdot e}$.
\end{Proposition}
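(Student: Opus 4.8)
The plan is to reduce immediately to the case $L = G = \SL_p$ (or rather $\SL(V)$ with $\dim V = p$), where $e$ is a regular nilpotent element, i.e.\ a single Jordan block of size $p$. Indeed, if $\s$ is non-$L$-cr then it is non-$G$-cr by Lemma \ref{L:GcrLcr}; and the containment $f \in \overline{L\cdot e}$ is a statement inside $\l$, so nothing is lost by passing to $L$. Moreover $L = L_1 \cdots L_k$ is a central product of simple factors, one of which, say $L_1$, is of type $A_{p-1}$, and $e = e_1 + \cdots + e_k$ with $e_j$ distinguished (hence regular, since these are type $A$ or products thereof) in $\l_j$. By the projection criterion for $L$-complete reducibility used in the proof of Proposition \ref{prop:Gcrovergp}, it suffices to produce an $\sl_2$-triple whose first factor is the given regular $e_1$ in $\sl_p$ and which is non-$\SL_p$-cr, then take $(e,h,f) = (e_1,h_1,f_1) + (e_2, h_2, f_2) + \cdots$ with the remaining triples arbitrary (these exist and give $L_j$-cr, hence harmless, subalgebras for $j \geq 2$ by Corollary \ref{cor:Gcrsubalgebra} applied in $L_j$, or directly since $e_j^{[p]}=0$ there). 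The projection to $L_1$ is then non-$L_1$-cr, so the whole subalgebra is non-$L$-cr.

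So the heart of the matter is: in $\g = \sl_p$ with $e$ a regular nilpotent, exhibit an $\sl_2$-triple $(e,h,f)$ generating a non-$G$-cr subalgebra with $f \in \overline{\SL_p \cdot e} = \cN$ (automatic, as $f$ is nilpotent). First I would write down an explicit such triple. Take $e$ to be the standard upper-triangular regular nilpotent on $V = \kk^p$ with basis $v_0, \dots, v_{p-1}$, $e v_i = v_{i-1}$. The usual characteristic-zero partner $f$ has $f v_i = i(p-i) v_{i+1}$, which is the Kostant triple; but here I instead want $f$ supported on the single superdiagonal entry joining $v_{p-1}$ back around — more precisely, I would look for $h$ diagonal with $h = \mathrm{diag}(p-1, p-3, \dots, -(p-1))$ reduced mod $p$: since $p-1 \equiv -1$, $p-3 \equiv -3$, etc., the entries are $-1, -3, \dots$, which modulo $p$ are genuinely distinct, so $\ad h$ still has the grading with $e$ in degree $2$; but the would-be $f$ in degree $-2$ now has a coefficient $i(p-i)$ that vanishes for $i = 0$ and stays nonzero for $1 \le i \le p-1$, so the standard $f$ degenerates. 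One then checks that $\sl_2$-triples on $e$ exist but every such $f$ lies (together with $e$) in a proper parabolic — concretely the span of $\{v_0, \ldots, v_{p-2}\}$ is $\langle e, f\rangle$-stable for the available $f$, or one identifies the module structure: the restriction of $V$ to $\s = \langle e,h,f\rangle$ is a non-split extension, so $\s$ fixes a flag not complemented by any Levi, witnessing non-$G$-cr-ness via the cocharacter/parabolic picture.

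The main obstacle I expect is the bookkeeping at $p = 3$ and the verification that the subalgebra is genuinely non-$G$-cr rather than merely "not obviously $G$-cr": one must produce a parabolic $\p \supseteq \s$ and argue no Levi of $\p$ contains $\s$. I would do this module-theoretically: show $V|_\s$ is indecomposable but not irreducible (a non-split self-extension of the $2$-dimensional or smaller irreducible, forced by the vanishing $i(p-i) \equiv 0$ at the ends), so that the $\s$-socle gives a canonical proper submodule fixed by $\s$; the stabiliser of a partial flag refining this is a proper parabolic $\p$, and any Levi $\l'$ of $\p$ would force $V|_\s$ to be semisimple as an $\l'$-module, contradicting indecomposability. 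For the remaining claim $f \in \overline{L \cdot e}$: since $L$ acts on $\l$ and $\overline{L\cdot e}$ is a union of nilpotent orbits containing the regular one, it is all of $\cN(\l)$, so any nilpotent $f$ — in particular the one we built — lies in it; this is immediate and needs no separate argument. I would close by noting the construction for general $p$ is uniform and the $p=3$ case can, if desired, be cross-checked against the explicit $6$-dimensional computation for $\sl_3$.
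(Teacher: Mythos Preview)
Your strategy coincides with the paper's: reduce to $L=\SL(V)$, $\dim V=p$, via Lemma~\ref{L:GcrLcr} and the product/projection criterion, then produce an $\sl_2$-triple through the regular $e$ whose span acts on $V$ non-semisimply. The paper executes this by taking $\s$ to be the image of $\sl_2$ in its action on the baby Verma module $Z_0(0)$, a non-split extension with composition factors $L(0)$ and $L(p-2)$; one of the two nilpotent generators of $\s$ acts by a full Jordan block and may, after conjugation, be identified with $e$, while the other automatically lies in $\cN(\l)=\overline{L\cdot e}$.

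Your explicit realisation, however, does not work as written. With $ev_i=v_{i-1}$ and $fv_i=c_iv_{i+1}$, setting $h=[e,f]$ and imposing $[h,e]=2e$ forces $c_i=(i+1)(c_0-i)$ for a free scalar $c_0$; the choice $c_i=i(p-i)$ fails this already at $i=1$ (one gets $2c_0-c_1\equiv 1\neq 2$), so that $(e,h,f)$ is not an $\sl_2$-triple at all. Moreover, the genuine characteristic-zero partner corresponds to $c_0=p-1$, and reducing mod $p$ gives the irreducible Steinberg module $L(p-1)$; in that case $\s$ is $G$-irreducible, so the standard $f$ does not ``degenerate'' and your assertion that ``every such $f$'' lands in a proper parabolic is false. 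The fix is to take $c_0\in\{0,\dots,p-2\}$: for instance $c_0=p-2$ makes $c_{p-2}=0$, and then $\langle v_0,\dots,v_{p-2}\rangle\cong L(p-2)$ really is an $\s$-submodule with non-split trivial quotient, recovering (a twist of) $Z_0(0)$ and making your non-$G$-cr argument go through. A minor aside on the reduction: the other Levi factors $L_j$ need not be of type $A$ and nothing forces $e_j^{[p]}=0$; both you and the paper leave the existence of the auxiliary triples in those factors implicit.
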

 
\begin{proof}By Lemma \ref{L:GcrLcr} it suffices to treat the case that $L=\SL(V)$ with $\dim V=p$. In that case, let $\s=\langle e,h,f\rangle$ be the image of $\sl_2$ under the representation given by the $p$-dimensional baby Verma module $Z_0(0)$; cf.~\cite[Section~5.4]{Jan98}. As $V\downarrow X=Z_0(0)$ is a non-trivial extension of the irreducible module $L(p-2)$ by the trivial module we have that $\s$ is not $L$-cr. It is easy to see that one of $e$ or $f$ has a full Jordan block on $V$ and is therefore regular. But the whole of $\cN(L)$ is the closure of a regular nilpotent element so we are done. 
\end{proof}

\begin{lemma} \label{lem:goodcharpsub}
Let $p$ be a good prime for $G$ and $(e,h,f)$ be an $\sl_2$-triple with $e,f \in \cN$. Suppose that $e$ and $f$ are distinguished in Levi subalgebras of $\g$ with no factors of type $A_{p-1}$. If $\s := \langle e,f \rangle$ is $G$-cr then $\s$ is a $p$-subalgebra.
\end{lemma}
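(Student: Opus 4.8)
The plan is to reduce everything to the assertion $e^{[p]}=0$. Since $f$ satisfies the same hypothesis as $e$, the identical argument will give $f^{[p]}=0$, and then it remains only to check that $h$ is toral, i.e.\ $h^{[p]}=h$. That $e^{[p]}=0$ is the essential point is clear from the converse direction: if $\s$ were already a $p$-subalgebra, $e^{[p]}$ would be a nilpotent element of $\s\cong\sl_2$ commuting with $e$, hence a scalar multiple of $e$, and $(\ad e)^p=\ad(e^{[p]})$ would force that scalar to vanish.

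First I would reduce to the case that $\s$ is $G$-irreducible. If $\s$ lies in a proper parabolic subalgebra, then it lies in a proper Levi subalgebra $\l=\Lie(L)$; by Lemma \ref{L:GcrLcr} it is $L$-cr, and the hypotheses on $e$ and $f$ pass to $L$, since a nilpotent element is distinguished in a unique (up to conjugacy) minimal Levi subalgebra and ``no factor of type $A_{p-1}$'' is a property of that Levi. So assume $\s$ is contained in no proper parabolic of $G$. Suppose first $G=\Cl(V)$ is classical. Then $G$-complete reducibility of $\s$ makes $V$ a semisimple $\s$-module, so by Steinberg's tensor product theorem $e$ (and likewise $f$) acts on $V$ with all Jordan blocks of size at most $p$; as $\Lie(\Cl(V))\hra\gl(V)$ is a faithful restricted embedding, $e^{[p]}=f^{[p]}=0$. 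The hypothesis --- equivalently, as in the proof of Lemma \ref{lem:cVisclosed}, that the partition of $e$ has no part equal to $p$ (for $\SL,\Sp$) or at most one (for $\SO$) --- limits the size-$p$ Jordan blocks of $e$ enough to exclude the non-restricted simple summands of $V|_\s$ (for $\SO$ one notes such a summand cannot be self-dual, its $h$-eigenvalues lying in a nontrivial coset of $\F_p$), so $h$ acts semisimply with eigenvalues in $\F_p$, whence $h^p=h$ on $V$ and $h^{[p]}=h$. Thus $\s$ is a $p$-subalgebra.

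Now let $G$ be exceptional, so $\s$ is $G$-irreducible. For all but finitely many $p$ one has $\cN_p=\cN$ and there is nothing to prove, so only finitely many pairs $(G,p)$ remain; for these, the nilpotent orbits not contained in $\cN_p$ but distinguished in a Levi subalgebra with no factor of type $A_{p-1}$ are finitely many and explicit, read off from the tables of \cite{SteMin} and the list of Levi subalgebras (the regular orbit of $G_2$ with $p=5$ is a typical case). For each such orbit $\O$ and each $\sl_2$-triple $(e,h,f)$ with $e\in\O$ and $f$ in an allowed orbit, one shows $\s=\langle e,h,f\rangle$ is not $G$-irreducible, using the classification of $\sl_2$-subalgebras of the exceptional Lie algebras and of $A_1$-type $G$-irreducible subgroups (Liebeck--Seitz, and Stewart, Thomas, Litterick--Thomas for the individual exceptional types; cf.\ \cite{SteTh}): the nilpotent orbits met by the Lie algebra of such a subalgebra all lie in $\cN_p$ once the orbits distinguished in a factor of type $A_{p-1}$ are removed --- and those removed orbits are exactly where the non-$G$-cr baby Verma triples of Proposition \ref{P:SLpE} occur. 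As the surviving subalgebras are Lie algebras of $A_1$-subgroups, $h$ is the differential of a cocharacter, hence toral, so $h^{[p]}=h$.

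The step I expect to be the main obstacle is the exceptional case in bad characteristic: there $G$-complete reducibility is detected by no single rational module, so the clean classical argument is unavailable and one must genuinely appeal to the classification of $\sl_2$-subalgebras (or $A_1$-type $G$-cr subgroups) of the exceptional groups. A related subtlety is that a $G$-cr subalgebra isomorphic to $\sl_2$ need not a priori be a $p$-subalgebra --- for instance $h^{[p]}$ might differ from $h$ by a nonzero nilpotent element of $C_\g(\s)$ --- so establishing the torality of $h$, and not merely $e^{[p]}=f^{[p]}=0$, is part of what the lemma asserts.
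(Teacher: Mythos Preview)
Your approach diverges substantially from the paper's. The paper gives a short, uniform argument: assuming $\s$ is not a $p$-subalgebra, \cite[Lemma~4.3]{ST18} forces $\s$ to be $L$-irreducible in some Levi $L=L_1\cdots L_r$ with $L_1$ of type $A_{p-1}$; the projection of $\s$ to $\l_1$ then acts irreducibly on the $p$-dimensional natural module, and the classification of $p$-dimensional irreducible $\sl_2$-modules (\cite[Section~5.4]{Jan98}) shows that the image of one of $e,f$ is regular in $\l_1$, contradicting the hypothesis on its Bala--Carter Levi. No split into classical versus exceptional types is needed.

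Your classical argument is essentially right, though the appeal to Steinberg's tensor product theorem is misplaced: you are working with $\sl_2$-modules, not $\SL_2$-modules, and what you actually need is that every irreducible $\sl_2$-module has dimension at most $p$ (so a nilpotent element has $x^p=0$ on it), together with the explicit description of those irreducibles to exclude the non-restricted ones via the Jordan-block and self-duality considerations you sketch. With that correction the $\SL/\Sp/\SO$ analysis goes through.

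The exceptional case, however, is not proved. You invoke ``the classification of $\sl_2$-subalgebras of the exceptional Lie algebras'' and conclude that the surviving $G$-irreducible $\sl_2$-subalgebras are Lie algebras of $A_1$-subgroups, hence restricted. But a classification of $G$-irreducible $\sl_2$-\emph{subalgebras}, as opposed to $A_1$-\emph{subgroups}, is precisely what is at stake: a priori a $G$-irreducible copy of $\sl_2$ need not integrate to a subgroup, and asserting that it does is essentially assuming the conclusion. You correctly flag this as the main obstacle, but you do not resolve it. The content you are reaching for is exactly \cite[Lemma~4.3]{ST18}, which characterises the $G$-cr $\sl_2$-subalgebras that fail to be $p$-subalgebras uniformly across all types; invoking it collapses your argument to the paper's two-line proof.
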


\begin{proof}
Suppose $\s$ is not a $p$-subalgebra. Then by \cite[Lemma~4.3]{ST18}, $\s$ is $L$-irreducible in a Levi subalgebra $\l = \Lie(L)$ with $L = L_1 L_2 \ldots L_r$ and $L_1$ of type $A_{rp-1}$, say, for some $r \in \mathbb{N}$. Therefore, the projection $\bar{\s}$ of $\s$ to $\l_1 =\Lie(L_1)$ is also $L_1$-irreducible, so that $\bar\s$ acts irreducibly on the $rp$-dimensional natural $L_1$-module. All irreducible representations of $\sl_2$ have dimension at most $p$ by \cite[Lemma~5.1]{Block}, thus $r=1$. Moreover, the classification of $p$-dimensional irreducible $\sl_2$-modules in \cite[Section~5.4]{Jan98} shows that the image of $e$ or $f$ in $\bar{\s}$ is regular in $L_1$, a contradiction.
\end{proof}

\section{Monogamy of \texorpdfstring{$\cV$}{V}} \label{sec:monogamycV}

We start with an observation that $\cV$ can be characterised using the following partial order on $\cN$. 

\begin{defn}
  Let $x,y \in \cN$. We say $x \preceq y$ (resp.~$x\prec y$) if $\rk (\ad(x)^{p-1}) \le \rk (\ad(y)^{p-1})$ (resp.~$\rk (\ad(x)^{p-1}) < \rk (\ad(y)^{p-1})$).
\end{defn}

Note that $\rk(\ad(x)^{p-1})$ can be calculated from the adjoint Jordan blocks of $x$ of size at least $p$, and if $G$ is exceptional, this can be done by reference to \cite[Section 3.1]{SteMin}. The next lemma follows from  a simple case-by-case check, using Tables \ref{tab:closurecVclass} \& \ref{tab:closurecVexcep}, the Hasse diagrams for nilpotent orbit closures and \cite[Section~3.1]{SteMin}.

\begin{lemma} \label{lem:rankorder}
Let $x,y \in \cN$ such that $x \in \cV$, and $y \notin \cV$. Then $x\prec y$.
\end{lemma}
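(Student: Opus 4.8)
The plan is to reduce the statement to a finite verification and organise that verification by type. The key observation is that $\rk(\ad(x)^{p-1})$ depends only on the partition of the adjoint action of $x$ (equivalently, the nilpotent orbit of $x$), and that it is a monotone function of the orbit under the closure order: if $x' \in \overline{G\cdot x}$ then the adjoint Jordan type of $x'$ dominates (is dominated by) that of $x$ in a way that forces $\rk(\ad(x')^{p-1}) \le \rk(\ad(x)^{p-1})$. Since $\cV$ is a single orbit closure $\overline{\cO_\lambda}$ by Lemma~\ref{lem:cVisclosed}, it suffices to show that every \emph{minimal neighbour} $Y$ of $\cV$ satisfies $\rk(\ad(e_Y)^{p-1}) > \rk(\ad(x)^{p-1})$ for $x$ in the top orbit of $\cV$; then any $y \notin \cV$ lies above some minimal neighbour in the closure order and monotonicity finishes the argument.

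First I would record the monotonicity lemma: for $x,y \in \cN$ with $x \in \overline{G\cdot y}$, one has $x \preceq y$. In classical types this is the statement that passing to a smaller orbit (dominance-smaller partition) cannot increase $\rk(\ad(\cdot)^{p-1})$; the adjoint partition is a polynomial/combinatorial function of the defining partition, and one checks it respects dominance. In exceptional types this is read off directly from the tables of \cite[Section~3.1]{SteMin}, which list $\rk(\ad(x)^{p-1})$ (or the relevant Jordan block data) orbit by orbit. Combined with Lemma~\ref{lem:cVisclosed}, the claim reduces to: for each minimal neighbour $Y$ listed in Table~\ref{tab:minneigh} (exceptional types) or each minimal partition not in $\cV$ (classical types), the value $\rk(\ad(\cdot)^{p-1})$ strictly exceeds that of the top orbit of $\cV$.

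For the classical types, the top orbit of $\cV$ is given by $\lambda$ in Table~\ref{tab:closurecVclass}, which is built to have no part exceeding $p$ (and at most one part equal to $p$ in the orthogonal case). A minimal neighbour either has a part exceeding $p$, or (in the $\SL$/$\Sp$ case) has a part equal to $p$, or (orthogonal case) has two parts equal to $p$ — in all cases producing an adjoint Jordan block of size $\ge p$ where the top orbit of $\cV$ produces strictly fewer such, hence strictly larger rank of $\ad(\cdot)^{p-1}$. For the exceptional types this is a finite table-lookup: for each pair $(G,p)$ in Table~\ref{tab:closurecVexcep} and each corresponding entry of Table~\ref{tab:minneigh}, compare the two values of $\rk(\ad(\cdot)^{p-1})$ using \cite{SteMin}. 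The main obstacle — really the only delicate point — is the bad-characteristic cases $p=3$ for $G_2$ and $E_8$, where Hesselink strata split and one must be careful that the relevant orbits (the split orbit $(\tilde A_1)_{(3)}$ in $G_2$, and the $A_7$ versus $A_4^2$ situation in $E_8$) have been correctly placed in the Hasse diagram; here one invokes Theorem~\ref{thm:Premet} and the explicit diagrams of Figure~\ref{G2Hassediagram} to ensure no neighbour has been missed and that monotonicity still applies across the split. Once the monotonicity lemma and these finitely many comparisons are in hand, the proof is complete.
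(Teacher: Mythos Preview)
Your proposal is correct and follows essentially the same route as the paper, which simply records that the lemma ``follows from a simple case-by-case check, using Tables~\ref{tab:closurecVclass} \& \ref{tab:closurecVexcep}, the Hasse diagrams for nilpotent orbit closures and \cite[Section~3.1]{SteMin}.'' Your reduction via monotonicity and minimal neighbours is a natural way to organise that check; note that the monotonicity of $\rk(\ad(\cdot)^{p-1})$ under closure is immediate from lower semicontinuity of rank, so you need not verify it combinatorially or by table lookup.
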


\begin{remark}Comparing ranks of $(p-1)$-th powers is necessary for the partial order to differentiate nilpotent orbits contained in $\cV$. For example, let be $G$ of type $E_6$, $p=5$, and take $x,y \in \cN$ to be representatives of the $D_4(a_1)$ and $A_4$ classes respectively. Then we have $x \in \cV$ and $y \notin \cV$. Using \cite[Table 16]{SteMin} we see that $\rk (\ad(x)) = \rk (\ad(y)) = 78 $, however $\rk (\ad(x)^{p-1}) = 11 < 15 = \rk (\ad(y)^{p-1})$.\end{remark}

Let $\X\subseteq \cN$. We say that $\X$ is \emph{partially monogamous} if the following holds. 

\begin{center}
Whenever $(e,h,f)$ and $(e,h',f')$ are two $\sl_2$-triples with $e,f, f' \in \X$ and $f, f' \preceq e$, then $f$ and $f'$ are conjugate under the action of $C_G(e)$.
\end{center}

\begin{lemma} \label{l:partialequalsKostant}
Let $\X$ be a subvariety of $\cN_p$. Then $\X$ is monogamous if and only if it is partially monogamous.
\end{lemma}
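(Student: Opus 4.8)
The statement is an "if and only if." One direction is immediate: if $\X$ is monogamous then the defining condition holds in particular for triples with $f, f' \preceq e$, so $\X$ is partially monogamous. The work is in the converse: assuming $\X$ is partially monogamous, I must upgrade the conclusion from triples $(e,h,f)$, $(e,h',f')$ with $f, f' \preceq e$ to arbitrary $\sl_2$-triples with $e, f, f' \in \X$. The key point is that for $\X \subseteq \cN_p$ — so every element of $\X$ is $[p]$-nilpotent — one can reduce to the case $f \preceq e$ by a symmetry argument: if $(e,h,f)$ is an $\sl_2$-triple, then $(f,-h,e)$ is also an $\sl_2$-triple, and $\rk(\ad(e)^{p-1})$ versus $\rk(\ad(f)^{p-1})$ is a $G$-invariant comparison, so exactly one of $e \preceq f$ or $f \preceq e$ holds unless they are $\preceq$-equivalent.

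First I would record the reduction: given $\sl_2$-triples $(e,h,f)$ and $(e,h',f')$ with $e, f, f' \in \X$, I want to show $f, f'$ are $C_G(e)$-conjugate. Since $\X \subseteq \cN_p$, Corollary~\ref{cor:Gcrsubalgebra} gives a $G$-cr $\sl_2$-subalgebra through each relevant nilpotent, and more importantly $e^{[p]} = f^{[p]} = (f')^{[p]} = 0$. The heart of the matter is that inside an $\sl_2 = \langle e, h, f\rangle$ with $e, f$ both $[p]$-nilpotent, the subalgebra is a $p$-subalgebra (or close to it), and in a $p$-representation of $\sl_2$ the elements $e$ and $f$ act with the same Jordan type — in particular $\rk(\ad(e)^{p-1}) = \rk(\ad(f)^{p-1})$, i.e. $e \preceq f$ and $f \preceq e$. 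So within $\X \subseteq \cN_p$ the hypothesis $f, f' \preceq e$ is automatically satisfied, and partial monogamy directly yields the conclusion. I would phrase this carefully using the fact that $\ad$ intertwines the $[p]$-structure, so $\ad(e)^{[p]} = \ad(e^{[p]}) = 0$ forces $\ad(e)^p = 0$, hence $e$ and $f$ act as a sum of Jordan blocks of size $\le p$ on $\g$ via $\ad$, and in each $\sl_2$-submodule of $\g$ of dimension $\le p$ the nilpositive and niilnegative generators have equal Jordan type.

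The main obstacle I anticipate is the subtlety that $\s = \langle e, h, f\rangle$ need not itself be a $p$-subalgebra of $\g$ even when $e, f \in \cN_p$: $h^{[p]}$ need not equal $h$, and $\s$ could be $L$-irreducible in a Levi with an $A_{p-1}$ factor (this is exactly the obstruction appearing in Lemma~\ref{lem:goodcharpsub} and Proposition~\ref{P:SLpE}). However, for the rank comparison I only need that $\ad(e)$ and $\ad(f)$ have the same Jordan type on $\g$, which is a statement about how the $\sl_2$ acts on $\g$ as a module, not about the internal $p$-structure of $\s$; the equality $\rk(\ad(e)^{p-1}) = \rk(\ad(f)^{p-1})$ follows because $\g$ decomposes into $\sl_2$-submodules on which, block by block, $e$ and $f$ are interchanged by the element of $\SL_2$ (or of the adjoint group's image) conjugating the triple $(e,h,f)$ to $(f,-h,e)$ — this conjugacy exists already inside the image of $\s$ itself, or failing that, one argues directly with the weight-space combinatorics of restricted $\sl_2$-modules as in \cite[Section~5.4]{Jan98}. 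I would close by noting that this makes $f \preceq e$ and $f' \preceq e$ vacuous hypotheses on $\X \subseteq \cN_p$, so partial monogamy and monogamy coincide.
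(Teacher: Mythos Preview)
Your central claim---that for an $\sl_2$-triple $(e,h,f)$ with $e,f\in\cN_p$ one always has $\rk(\ad(e)^{p-1})=\rk(\ad(f)^{p-1})$---is false, and this is precisely the subtle point the lemma is designed to navigate. The Weyl element $w\in\SL_2$ does interchange $e$ and $-f$, but this only yields a $\GL(\g)$-conjugacy between $\ad(e)$ and $\ad(f)$ if the $\sl_2$-action on $\g$ integrates to an $\SL_2$-action; it need not. Concretely, take $G=\SL_p$ and the triple $(e,h,f)$ arising from the baby Verma module $Z_0(0)$ as in Proposition~\ref{P:SLpE}: here $f$ is regular (a single Jordan block of size $p$ on the natural module) while $e$ has Jordan type $(p-1,1)$, yet both satisfy $e^{[p]}=f^{[p]}=0$. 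These lie in different $G$-orbits, and Lemma~\ref{lem:rankorder} gives $\rk(\ad(e)^{p-1})<\rk(\ad(f)^{p-1})$. Your fallback to ``weight-space combinatorics of restricted $\sl_2$-modules'' does not rescue this: even for restricted modules, $M$ and its Chevalley twist $M^\sigma$ need not be isomorphic (the baby Verma and its opposite are the standard example), so $e$ and $f$ can genuinely have different Jordan types.

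The paper's argument avoids this by not trying to compare $e$ and $f$ directly. Instead it argues by contradiction: if partial monogamy holds but some pair $(e,h,f)$, $(e,h',f')$ with $e,f,f'\in\X$ is not $C_G(e)$-conjugate, then one of $f,f'$---say $f$---must satisfy $e\prec f$. Now swap roles: consider triples through $f$. One has $(f,-h,e)$ with $e\preceq f$; and Proposition~\ref{prop:Gcrovergp} supplies a second triple $(f,\tilde h,\tilde e)$ with $\tilde e$ $G$-conjugate to $f$, hence $\tilde e\preceq f$ as well. Partial monogamy at $f$ then forces $e$ and $\tilde e$ to be $C_G(f)$-conjugate, so $e$ is conjugate to $f$, contradicting $e\prec f$. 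The key ingredient you are missing is this appeal to Proposition~\ref{prop:Gcrovergp} to manufacture a second triple at $f$ with controlled $\preceq$-behaviour.
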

\begin{proof}
One direction is trivial. Suppose $\X$ is partially monogamous but not monogamous. Then there exist $\sl_2$-triples $(e,h,f)$ and $(e,h',f')$ with $e,f, f' \in \X$ such that $(e,h,f)$ is not $C_G(e)$-conjugate to $(e,h',f')$. Since $\X$ is partially monogamous it follows that either $f \not \preceq e$ or $f' \not \preceq e$; without loss of generality we assume the former. Thus $\rk (\ad(e)^{p-1}) < \rk (\ad(f)^{p-1})$, and in particular, $e$ and $f$ are not conjugate.

Let $(f,\tilde{h},\tilde{e})$ be an $\sl_2$-triple with $f$ conjugate to $\tilde{e}$, which exists by Proposition \ref{prop:Gcrovergp}. Then the two $\sl_2$-triples $(f,-h,e)$ and $(f,\tilde{h},\tilde{e})$ satisfy $f,e,\tilde{e} \in \X$ and $e, \tilde{e} \preceq f$. But as $\X$ is partially monogamous, we have that $f$ is conjugate to $\tilde e$, which is in turn conjugate to $e$, a contradiction.
\end{proof}

Theorem \ref{thm:mainKos} for classical types follows from Lemma \ref{lem:cVisclosed} and the main theorem of \cite{GP}. For the remainder of this section we suppose $G$ is of exceptional type. 

\subsection{Bad characteristic} \label{ss:badchar}

We first treat the case when $p$ is bad for $G$. Fix $0 \neq e \in \cV$ for the remainder of this section. We use the representatives as in \cite{LS12}, presented in \cite{SteMin}. If $G$ is of type $G_2$ and $p=3$, then the element $e$ with label $(\tilde{A}_1)_{(3)}$ cannot be extended to an $\sl_2$-triple by \cite[Theorem 1.7]{ST18}. So we exclude that case from now on. 

\begin{lemma} \label{lem:smoothnorms}
The normaliser $N_G(\langle e \rangle)$ (and centraliser $C_G(e)$) is smooth if and only if the class of $e$ does not occur in the following table.

\begin{table}[ht!]
\centering
\begin{tabular}{| c | c | c |}  
\hline
$G$ & $p$ & class of $e$  \\ 
\hline\hline
$G_2$ & $3$ & $G_2(a_1)$ \\
 $F_4$ & $3$ & $F_4$, $\tilde{A}_2 A_1$ \\
 $E_6$ & $3$ & $E_6$, $E_{6}(a_{1})$, $E_{6}(a_{3})$, $A_{5}$, $A_{2}^2 A_{1}$, $A_2^2$ \\
 $E_8$ & $3$ & $E_{8}$, $E_{8}(a_{1})$, $E_{8}(a_{3})$, $E_{7}$, $E_{6} A_{1}$, $E_{8}(b_{6})$, $A_{7}$, $E_{6}$, $E_{6}(a_{3}) A_{1}$, $A_{5} A_{1}$, $A_{2}^2 A_{1}^2$, $A_{2}^2 A_{1}$ \\
 & $5$ & $E_8$, $A_4 A_3$ \\
  \hline
\end{tabular}
\end{table} 
\end{lemma}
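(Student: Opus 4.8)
The plan is to work case-by-case through the finite list of classes appearing in the restricted nullcone $\cN_p$ for $G$ of exceptional type in bad characteristic, using the known representatives from \cite{SteMin} (equivalently \cite{LS04}). Since $p$ is bad and $G$ is exceptional, the relevant pairs $(G,p)$ are $(G_2,3)$, $(F_4,3)$, $(E_6,3)$, $(E_7,3)$, $(E_8,3)$, and $(E_8,5)$; in each case the orbits lying in $\cN_p$ are few, so the verification is genuinely finite. The dichotomy being claimed is equivalent to the statement that $\dim C_G(e) = \dim \g - \dim(G\cdot e)$ fails exactly for the listed classes, or — what is the same thing, since $N_G(\langle e\rangle)/C_G(e)$ is a torus (it embeds in $\G_m$ via the scaling action on $\langle e\rangle$) and hence smooth — that $C_G(e)$ is non-smooth exactly for those $e$.

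First I would recall the standard smoothness criterion: the scheme-theoretic centraliser $C_G(e)$ is smooth if and only if $\dim_\kk \c_\g(e) = \dim C_G(e)$, and since we always have $\dim C_G(e) = \dim G - \dim(G\cdot e)$ while $\dim_\kk \c_\g(e) \geq \dim G - \dim(G\cdot e)$ (with equality over a field of characteristic $0$ or good characteristic), the failure is detected by $\dim_\kk \c_\g(e) > \dim G - \dim(G\cdot e)$, i.e.\ $\ad(e)$ has smaller rank than in characteristic $0$. Concretely, for each class I would compute $\operatorname{rank}(\ad(e))$ in the bad characteristic using the explicit representative and compare it with the characteristic-$0$ value of $\dim(G\cdot e)$ (available from \cite{Car93}); when these disagree, the centraliser is non-smooth, and when they agree, it is smooth. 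For the orbit dimensions and ranks of powers of $\ad(e)$ the data in \cite[Section~3.1]{SteMin} is designed precisely for this, so the computation amounts to table lookup plus, in a handful of borderline cases, an explicit rank computation with the matrix of $\ad(e)$ acting on $\g$.

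The main obstacle will be bookkeeping: ensuring the list in the table is exactly correct, neither missing a non-smooth class nor wrongly including a smooth one. A subtlety to watch is that some restricted nilpotent classes (notably $(\tilde{A}_1)_{(3)}$ in $G_2$, $p=3$, and the extra classes that arise from Hesselink strata splitting) do not exist in characteristic $0$, so "the characteristic-$0$ orbit dimension" must be interpreted via the ambient Hesselink stratum; for such classes one computes $\operatorname{rank}(\ad(e))$ directly and checks it against $\dim\g - \dim\c_\g(e)$ rather than against a Carter-table value. I would also cross-check against the literature where possible: the non-smoothness of the centralisers of the regular and subregular classes in bad characteristic is classical (e.g.\ via \cite{Spa82}), and for type $E_8$ the pattern of non-smooth centralisers has been tabulated before; agreement with these gives confidence in the full list. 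Finally, I would note that $C_G(e)$ and $N_G(\langle e\rangle)$ are simultaneously smooth or not, since $N_G(\langle e\rangle)^\circ = C_G(e)^\circ \cdot (Z(G)^\circ)$ has the same Lie algebra behaviour and the component group contributes nothing to smoothness — more precisely $\Lie N_G(\langle e\rangle) = \{x \in \g : [x,e] \in \langle e\rangle\}$, which differs from $\c_\g(e)$ by at most one dimension coming from a semisimple element acting by a nonzero scalar on $e$, exactly matching the dimension of $N_G(\langle e\rangle)/C_G(e)$, so the two smoothness conditions coincide.
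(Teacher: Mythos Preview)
Your approach is essentially the same as the paper's: reduce the equivalence of smoothness for $N_G(\langle e\rangle)$ and $C_G(e)$ to a dimension count, then verify smoothness of $C_G(e)$ class by class by comparing $\dim\c_\g(e)$ with $\dim C_G(e)$. The paper makes the first reduction by exhibiting a cocharacter $\tau$ with $\im(\tau)\subset N_G(\langle e\rangle)\setminus C_G(e)$ (so $\dim N_G(\langle e\rangle)=\dim C_G(e)+1$) and using the existence of an $\sl_2$-triple to get $\dim\n_\g(\langle e\rangle)=\dim\c_\g(e)+1$; your version via the embedding $N_G(\langle e\rangle)/C_G(e)\hookrightarrow\G_m$ is fine in spirit, though the line $N_G(\langle e\rangle)^\circ=C_G(e)^\circ\cdot Z(G)^\circ$ is not right (the extra one-parameter subgroup comes from a cocharacter scaling $e$, not from the centre).

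Two adjustments are needed. First, the lemma as stated ranges over \emph{all} nilpotent classes in bad characteristic, not just those in $\cN_p$: the table lists the regular classes $F_4$, $E_6$, $E_8$, none of which are $p$-nilpotent for the primes in question. Restricting your case check to $\cN_p$ would therefore not prove the lemma as written (though it would suffice for the application in the paper, where $e\in\cV$). Second, rather than comparing $\operatorname{rank}(\ad e)$ against characteristic-$0$ orbit dimensions from \cite{Car93} and then having to argue that orbit dimensions are unchanged in bad characteristic, it is cleaner to read off $\dim C_G(e)$ directly from \cite[Tables~22.1.1--22.1.5]{LS04}, which are valid in all characteristics; this is exactly what the paper does, pairing those values against a Magma computation of $\dim\c_\g(e)$. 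Your proposed fallback for the new classes in bad characteristic, ``check $\operatorname{rank}(\ad e)$ against $\dim\g-\dim\c_\g(e)$'', is a tautology (these are equal by definition), so there too you really do need an independent source for $\dim C_G(e)$.
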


\begin{proof}
Every element $e$ has a cocharacter $\tau$ for which $\im(\tau)$ is contained in $N_G(\langle e \rangle)$ but not $C_G(e)$. Therefore, the dimension of $N_G(\langle e \rangle)$ is precisely $\dim C_G(e) + 1$. Similarly, $\dim \n_\g(\langle e \rangle) = \dim \c_g(\langle e \rangle) + 1$ thanks to the existence of $\sl_2$-triples. Therefore $N_G(\langle e \rangle)$ is smooth precisely when $C_G(e)$ is smooth. 

It is straightforward to use Magma to calculate the dimension of $\c_\g(e)$. Comparing these dimensions with the dimension of $C_G(e)$ presented in \cite[Tables~22.1.1--22.1.5]{LS12} completes the proof.  
\end{proof}

Observe that the set of classes in Lemma \ref{lem:smoothnorms} does not intersect $\cV$, so we may now deduce an important reduction.  

\begin{Proposition} \label{prop:hisfixed}
There exists an $\sl_2$-triple $(e,\bar{h},\bar{f})$ with $\bar{f}$ conjugate to $e$ and $\bar{h} \in \t = \Lie(T)$. Moreover, if $(e,h,f)$ is also an $\sl_2$-triple then $h$ is $C_G(e)$-conjugate to $\bar{h}$. 
\end{Proposition}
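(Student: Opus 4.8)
The plan is to prove the existence first, then uniqueness, treating these as essentially separate tasks.

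\textbf{Existence.} First I would establish that $e$ lies in an $\sl_2$-triple $(e, h_0, f_0)$: this is immediate from Proposition~\ref{prop:Gcrovergp} (or Corollary~\ref{cor:Gcrsubalgebra}) since $e \in \cV \subseteq \cN_p$ and, having excluded the class $(\tilde A_1)_{(3)}$ in type $G_2$, $p=3$, every such $e$ does lie in an $\sl_2$-triple. The issue is to move $h_0$ into $\t$ and simultaneously keep $\bar f \in \cV$. For the first, one uses an associated cocharacter: there is a cocharacter $\tau \colon \G_m \to G$ with $\operatorname{Ad}(\tau(t)) e = t^2 e$, and by conjugating the triple by an element of $C_G(e)^\circ$ one can assume $h_0 = \mathrm{d}\tau(1) \in \Lie(\im \tau)$; since $\im\tau$ is a torus, it is contained in a maximal torus which (again after conjugating, this time preserving $e$ only up to conjugacy — but we are free to choose $e$ in its $G$-orbit) we may take to be $T$. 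So $\bar h \in \t$. For the second, note the triple $(e, -\bar h, \bar f)$ at this stage has some $\bar f \in \cN$ with $\bar f$ conjugate to $\dots$; more carefully, $\bar f$ is a nilpositive/nilnegative partner, and because the whole construction can be arranged inside $\Lie(X)$ for a $G$-cr $A_1$-subgroup $X$ with $e \in \Lie X$ (Proposition~\ref{prop:Gcrovergp}), the element $\bar f \in \Lie X$ is conjugate to $e$, hence lies in $\cV$ (as $\cV$ is $G$-stable and contains $e$). So existence reduces to: inside $\Lie X$ for the $G$-cr $A_1$ from Proposition~\ref{prop:Gcrovergp}, pick the standard triple and note its $h$ is $T$-conjugate into $\t$.

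\textbf{Uniqueness of $h$.} Suppose $(e, h, f)$ is any $\sl_2$-triple. I want $h \sim_{C_G(e)} \bar h$. The classical argument (Kostant, Premet) goes: $h - \bar h$ lies in $\c_\g(e)$ — indeed $[e, h] = -2e = [e, \bar h]$ so $[e, h - \bar h] = 0$ — and one shows $h - \bar h$ is a nilpotent element of $\c_\g(e)$, then applies a fixed-point/conjugacy argument. The cleaner route, which I would follow, is via the cocharacter: both $h$ and $\bar h$ are the differentials of associated cocharacters for $e$, and associated cocharacters for a given nilpotent are unique up to conjugacy by $C_G(e)^\circ$ — this is a theorem of Premet/Jantzen valid when $C_G(e)$ is "sufficiently nice", and more robustly one has the general statement (e.g. in \cite[Section 5.3]{Jan04}) that any two cocharacters associated to $e$ are $C_G(e)^\circ$-conjugate, which holds without restriction on $p$. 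Given that, $h$ and $\bar h$ being the differentials of $C_G(e)^\circ$-conjugate cocharacters are themselves $C_G(e)$-conjugate. The subtlety is whether the $h$ coming from an arbitrary $\sl_2$-triple is the differential of an associated cocharacter — in good characteristic this is automatic, but here we must use smoothness/dimension information. This is exactly where Lemma~\ref{lem:smoothnorms} enters: for $e \in \cV$, one checks that the classes appearing there do \emph{not} lie in $\cV$ (the non-smooth cases are all "large" orbits, near-regular, which are excluded from $\cV$ by the $A_{p-1}$-Levi and $x^{[p]} = 0$ conditions), so $C_G(e)$ is smooth for all $e \in \cV$, and then the semisimple element $h$ normalising $\langle e \rangle$ can be built into a cocharacter of $G$ associated to $e$ in the usual way.

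\textbf{Main obstacle.} I expect the crux to be the second part: verifying that for \emph{every} $e \in \cV$ the centraliser $C_G(e)$ is smooth, so that the cocharacter machinery applies and an arbitrary $\sl_2$-triple's toral element is an associated cocharacter's differential. This requires cross-referencing Lemma~\ref{lem:smoothnorms}'s exceptional list against Tables~\ref{tab:closurecVexcep} and the Hasse diagrams to confirm none of those bad classes lies in $\cV$ — plausible since $\cV$ is always the closure of a fairly small orbit, but it must be checked case by case, and in the borderline small-$p$ exceptional cases (e.g. $E_8$, $p=3$, where $\cV = \overline{4A_1}$) one should confirm explicitly that $A_7$, $E_6$, etc.\ are above $\cV$. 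A secondary, more mechanical point is keeping track of the sign: the triple $(e, -\bar h, \bar f)$ versus $(e, \bar h, \bar f)$ — one must be careful which of $\bar h$, $-\bar h$ pairs with the nilnegative element, but this is bookkeeping, not a genuine difficulty.
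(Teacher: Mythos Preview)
Your existence argument is broadly sound and close in spirit to the paper's, though the paper streamlines it: rather than invoking an associated cocharacter, it uses the smoothness of $N_G(\langle e\rangle)$ (from Lemma~\ref{lem:smoothnorms}, valid for all $e\in\cV$ as you correctly anticipate) to conclude that all maximal tori of $\n_\g(\langle e\rangle)$ are $N_G(\langle e\rangle)$-conjugate, and a Magma check that $\n_\g(\langle e\rangle)\cap\t$ is such a maximal torus. This immediately puts $\bar h$ into $\t$.

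The uniqueness argument, however, has a genuine gap. Proposition~\ref{prop:hisfixed} lives in the \emph{bad characteristic} subsection, precisely where the associated-cocharacter machinery you invoke is not available: Pommerening's existence result and the conjugacy statement in \cite[\S5]{Jan03} both assume $p$ is good, so your claim that this ``holds without restriction on $p$'' is not justified. More seriously, your argument presupposes that the $h$ from an arbitrary $\sl_2$-triple is semisimple (indeed toral), in order to lift it to a cocharacter; but nothing you have said establishes this, and it is exactly the hard point. Note also that the statement places \emph{no} hypothesis on $f$, so even the toral-ness argument used later in good characteristic (Lemma~\ref{lem:hunique}, which needs $f\in\cV$ to invoke Lemma~\ref{lem:goodcharpsub}) is unavailable here.

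The paper's route is quite different and essentially computational. One forms the abelian $p$-closed subalgebra $\h=\langle h^{[p]^r}\rangle\subset\n_\g(\langle e\rangle)$ and decomposes it as $\t'\oplus\n'$ with $\t'$ toral; smoothness again lets one conjugate $\t'$ into $\t$, whence $\bar h\in\t'$ and one writes $h=\bar h+h'$ with $h'\in\c_\g(e)\cap\c_\g(\bar h)\cap\im(\ad e)$. Two Magma checks then finish the job: first, every element of $W:=\c_\g(\langle e,\bar h\rangle)\cap\im(\ad e)$ is $p$-nilpotent (so $h'$ has only zero eigenvalues, forcing $[\bar h,f]=-2f$); second, $W\cap\ad(e)\bigl(\ker(\ad(\bar h)+2I)\bigr)=0$, which pins $h'$ to zero. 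Since in bad characteristic only a handful of orbits lie in $\cV$, these checks are finite and feasible. Your conceptual approach would be preferable if it worked, but as it stands the associated-cocharacter step does not go through in this setting.
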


\begin{proof}
We know from Proposition \ref{prop:Gcrovergp} that there is an $\sl_2$-triple $(e,\bar{h},\bar{f})$ with $\bar{f}$ in the same nilpotent class as $e$. By Lemma \ref{lem:smoothnorms}, the group $N_G(\langle e \rangle)$ is smooth. Therefore, all maximal tori in $\n_\g(\langle e \rangle)$ are $N_G(\langle e \rangle)$-conjugate. A computation in Magma shows that $\n_\g(\langle e \rangle) \cap \t$ is a maximal torus of $\n_\g(\langle e \rangle)$. So we may assume that $\bar{h}$ is contained in $\t$ (noting that if $(\lambda e, \bar{h}^g, \bar{f}^g)$ is an $\sl_2$-triple then so is $(e, \bar{h}^g,\lambda \bar{f}^g)$). 

For the final part, first note that since $[h,e] = 2e$ we have $[h^{[p]},e] = \ad(h)^p e = 2e$ thanks to Fermat's Little Theorem. Therefore $\h = \langle h^{{[p]}^r} \mid r = 0,1,\ldots \rangle$ is an abelian $p$-closed subalgebra of $\n_\g(\langle e \rangle)$. It follows from \cite[Chapter~2, Corollary~4.2]{SF88} that $\h = \t' \oplus \n'$ where $\t'$ is the set of semisimple elements of $\h$. Since $\t'$ is a torus, the above argument shows that up to $N_G(\langle e \rangle)$-conjugacy we may assume that $\t'$ is contained in $\t$. In particular, $\bar{h} \in \t'$. 

Because $\c_\g(\langle e \rangle )$ has codimension $1$ in $\n_\g(\langle e \rangle )$ and $\bar{h} \not \in \c_\g(\langle e \rangle )$ we see that the torus $\t'$ decomposes as $\t' = \c_{\t'}(e) \oplus \langle \bar{h} \rangle$. Furthermore, $\n' \subset \c_\g(\langle e \rangle)$. It follows that $h = \bar{h} + h'$ for some $h' \in \c_\g(e) \cap \c_\g(\bar{h})$. 

Since $h = [e,f]$ and $\bar{h} = [e,\bar{f}]$ we also have $h' \in \im(\ad(e))$. Thus
$$h' \in W = \c_\g(\langle e, h \rangle)  \cap \im(\ad(e)).$$ 
Another Magma check shows that every element in $W$ is $p$-nilpotent. 

In particular, all eigenvalues of $h'$ are $0$. Since $h = \bar{h} + h'$ and $[h,f] = -2f$ we must have $[\bar{h},f] = -2f$. Therefore, $f \in F = \ker(\ad(\bar{h}) + 2I_{\dim \g})$ and so $h = [e,f] \in \im(\ad(e))(F)$. Note that $\bar{f} \in F$ also, so $\bar{h} \in \im(\ad(e))(F)$ and hence $h' \in \im(\ad(e))(F)$. 

Thus $h' \in W \cap \im(\ad(e))(F)$. A final easy check in Magma shows that $W \cap \im(\ad(e))(F) = 0$, as required.  
\end{proof}

We now describe an ad-hoc method to prove that if $(e,h,f')$ is an $\sl_2$-triple with $f' \in \cV$ and $f' \preceq e$ then $f'$ is uniquely determined up to $C:= (C_G(e) \cap C_G(h))$-conjugacy. In principle, this can be implemented by hand, but for speed and accuracy we have used Magma. Applying Proposition \ref{prop:hisfixed} and Lemma \ref{l:partialequalsKostant} then completes the proof that $\cV$ is monogamous. 

\underline{Setup:}

By Proposition \ref{prop:hisfixed}, there exists an $\sl_2$-triple $(e,h,f)$ with $h \in \t = \Lie(T)$ and $f \in \cV$ in the same nilpotent class as $e$. Let $(e,h,f')$ be an $\sl_2$-triple with $f' \in \cV$ and $f' \preceq e$. Since \begin{equation}[h,f']=-2f'\label{c1}\end{equation} we have $f' \in F:=\ker(\ad(h) + 2I_{\dim(\g)})$. We set up a generic element of the subspace $F$, namely $\tilde f = \sum x_i v_i \in \g$ where the $x_i$ are variables and $v_1, \ldots, v_{\dim (F)}$ is a basis for $F$. One can view the set of all $\tilde{f}$ as describing a subvariety $\cf$ of $\g$. In Steps 1 to 3 below, we add in additional equations and
thus replace $\cf$ with successively smaller sets (still called $\cf$ by abuse of notation).

\underline{Step 1:} The equation \begin{equation}[e,\tilde{f}]= h\label{c2}\end{equation} yields a set of linear equations among the $x_i$. We use these to constrain $\tilde{f}$ and thus reduce the dimension of $\cf$. Now every element of $\cf$ forms an $\sl_2$-triple with $e$. 

\begin{example}
We give an example where Step 1 is sufficient. Let $G$ be of type $E_7$, $p=3$ and $e = e_{\alpha_2} + e_{\alpha_5} + e_{\alpha_7}$. Then $e$ is a representative of the $(A_1^3)^{(1)}$ orbit and $e \in \cV$ by Lemma \ref{lem:cVisclosed}. On this occasion it is obvious that $(e,h,f)$ is an $\sl_2$-triple with $h=h_2 + h_5 + h_7 \in \t$ and $f= e_{-\alpha_2} + e_{-\alpha_5} + e_{-\alpha_7}$.

Let $F:=\ker(\ad(h) + 2I_{\dim(\g)})$. A straightforward calculation shows that the space $F$ is $27$-dimensional with a basis of root vectors $v_1 = e_{r_1}, \ldots, v_{27} = e_{r_{27}}$ for some set of roots $r_1, \ldots, r_{27}$; 
in particular $r_{12} = -\alpha_2$, $r_{13} = -\alpha_5$ and $r_{14} = -\alpha_7$.  

We let $\tilde f = \sum_i x_i v_i$ as above. We then compute $[e,\tilde{f}] = h$. For $i \neq 12,13,14$ we find that the left hand side has a coordinate of the form $\lambda x_i$ for $\lambda = 1$ or $2$. Thus $x_i = 0$ for $i \neq 12,13,14$. On the other hand the coordinate of $h_2$ is seen to be equal to $x_{14} + 2$. Thus $x_{14}$ is $1$. Similarly, the coordinates of $h_5$ and $h_7$ are $x_{13}+2$ and $x_{12}+2$, respectively. We have therefore determined all the variables in $\tilde{f}$ and in fact $\tilde{f} = f$, which is sufficient.     
\end{example}

\underline{Step 2:} The adjoint action of $C$ preserves $\cf$. Find a set of variables $\{ x_i \mid i \in Z\}$ such that every $C$-orbit in $\cf$ contains a representative with $x_i = 0$ for $i \in Z$. Thus we may assume that these variables are zero in $\tilde{f}$, further reducing $\cf$.

\begin{example}
We give an example where Steps 1 and 2 are sufficient. Let $G$ be of type $G_2$ and $p =3$. Consider $e = e_{10}$ which is a representative of the $\tilde{A_1}$ orbit, thus contained in $\cV$ by Lemma \ref{lem:cVisclosed}.  

Clearly, if $h = h_{10}, f= e_{-10}$, then $(e,h,f)$ is an $\sl_2$-triple with $f \in \cV$. Define $F:=\ker(\ad(h) + 2I_{\dim(\g)})$. This is 3-dimensional and we build $\tilde f$ as above: 
\[\tilde f = x_1e_{-11}+x_2e_{-10}+x_3e_{21}.\]

After Step 1 we find \[\tilde f = x_1e_{-11}+e_{-10}+x_3e_{21}.\]

Now we apply elements of $C = C_G(e) \cap C_G(h)$ to $\tilde f$. First consider $x_{-01}(t) \in C$. We calculate that
\[ x_{-01}(t) \cdot \tilde f = (t + x_1)e_{-11} +e_{-10}+x_3e_{21}. \]
Therefore, by setting $t = -x_1$, we see that every $C$-orbit in $\cf$ contains a representative with $x_1 = 0$. 
We're down to \[\tilde{f} = e_{-10}+x_3e_{21}.\] 
Finally, conjugation by $x_{31}(t) \in C$ sends $\tilde{f}$ to $e_{-10} + (t + x_3) e_{21}$. Thus we conclude that $\tilde{f} = f$, as required.    
\end{example}

\underline{Step 3:} Finally, we impose the condition that $\tilde{f}$ should represent an element $f' \in \cV$ with $f' \preceq e$. Since every element in $\cV$ is $p$-nilpotent, the equation  
\begin{equation}\ad(\tilde{f})^p = 0.\label{c3}\end{equation} yields further polynomial equations we want the $x_i$ to satisfy.

Forcing $\cf$ to only contain elements $f'$ with $f' \preceq e$ is slightly more subtle since we cannot simply calculate the `rank' of $M = \ad(\tilde{f})^{p-1}$. Let $R = \text{rank}(\ad(e)^{p-1})$ and $\epsilon$ be a map evaluating the remaining variables to choices in $\kk$ (so each $f' \in \cf$ is simply some $\epsilon(\tilde{f})$). We find a subset $r_1, \ldots, r_R$ of rows and subset $c_1, \ldots, c_R$ of columns such that, up to the reordering of rows and columns, the corresponding submatrix $S$ of $M$ is upper triangular and all diagonal entries are elements of $\mathbb{F}_p^*$. Then any element $f' \in \cf$ will satisfy $\rk(\ad(f')^{p-1}) \geq R$. We only want those elements $f' \preceq e$ which means $\rk(\ad(f')^{p-1}) \leq R$. Thus, given any row $r$ of $M$ the element $\epsilon(r)$ is in the span of $\epsilon(r_1), \ldots, \epsilon(r_R)$. In particular, a row $r'$ of $M$ with zeroes at all columns $c_1, \ldots, c_R$ evaluates to zero. This final set of conditions is enough to force all remaining variables to be $0$.

\begin{example}
We give an example where we require Step 3. Let $G$ be of type $G_2$ and $p =3$. Consider $e = e_{01}$ which is a representative of the $A_1$ orbit, thus contained in $\cV$ by Lemma \ref{lem:cVisclosed}.  

Take $h = h_{01}, f= e_{-01}$, then $(e,h,f)$ is an $\sl_2$-triple in $\g$ with $f \in \cV$. Define $F:=\ker(\ad(h) + 2I_{\dim(\g)})$. This is 5-dimensional and we build $\tilde f$ as above: 
\[\tilde f = x_1e_{-32}+x_2e_{-01}+x_3e_{-10}+x_4e_{11}+x_5e_{32}.\]

After Step 1 we find \[\tilde f = x_1e_{-32}+e_{-01}+x_3e_{-10}+x_4e_{11}+x_5e_{32}.\]

There are no elements of $C = C_G(e) \cap C_G(h)$ which we can use to reduce $\tilde f$, so we move onto Step 3. 

The equation $\ad(\tilde{f})^p = 0$ gives many relations amongst the remaining variables but none that allow us to conveniently reduce $\tilde{f}$. Consider the matrix $M = \ad(\tilde{f})^{p-1}$. The first, eighth, tenth and thirteenth column of $M$ consist only of zeroes, so we remove them, leaving the matrix $M'$ as follows.
\[\small\begin{pmatrix}
  x_1x_5 & 0 & 0 & x_5 & 2x_4^2 & 0  & 0 & x_4x_5 & 0 &  x_5^2 \\
  0 & 2x_4 & 0 & 0 & 0 & x_5  & 0 & 0 & 0 & 0 \\
  0 & 0 & 2x_4 & 0 & 0 & 0 & x_5 & 0 & 0 & 0  \\
  0 & 0 & 0 & 0 & 2x_1x_5+x_3x_4 & 0 & 0 & x_3x_5+x_4^2 & 0  & 0\\
  0 & 2x_1x_4+2x_3^2 & 0 & 0 & 0 & x_1x_5+2x_3x_4 & 0 & 0 & x_3x_5+x_4^2 &  0 \\
  0 & x_3 & 0 & 0 & 0 & 2x_4 & 0 & 0 & x_5 & 0  \\
  0 & 0 & x_3 & 0 & 0 & 0 & x_4 & 0 & 0 & 0  \\
  0 & 0 & 0 & 0 & 0 & 0 & 0 & 0 & 0 & 0  \\
  0 & 0 & 0 & 0 & x_1x_4 + x_3^2 & 0 & 0 & 2x_1x_5 + x_3x_4 & 0  & 0 \\
  x_1 & 0 & 0 & 1 & x_3 & 0 & 0 & x_4 & 0 & x_5 \\
  0 & 2x_1 & 0 & 0 & 0 & 2x_3 & 0 & 0 & 2x_4  & 0 \\
  0 & 0 & 2x_1 & 0 & 0 & 0 & x_3 & 0 & 0 & 0 \\
 x_1^2 & 0 & 0 & x_1 & x_1x_3 & 0 & 0 & 2x_3^2 & 0  & x_1x_5 \\
 0 & 0 & 0 & 0 & 0 & x_1 & 0 & 0 & x_3 & 0
  \end{pmatrix}\]

We calculate that $R = \text{rank}(\ad(e)^{p-1}) =1$. Therefore, if $\epsilon(\tilde{f}) = f' \preceq e$ for some evaluation map $\epsilon$, the rank of $\epsilon(M')$ is at most one. Observe that $M'_{10,4}=1$ and so the rank of $\epsilon(M')$ is at least one. It follows that every row of $\epsilon(M')$ is a multiple of the tenth row of $\epsilon(M')$. 

Consider the sixth row of $M'$. This only has nonzero entries in columns 2, 6 and 9, namely $x_3, 2x_4$ and $x_5$. Since the tenth row is zero in columns 2, 6 and 9, the sixth row of $\epsilon(M')$ is zero. Hence $x_3=x_4=x_5=0$.

Similarly, row 11 of $\epsilon(M')$ is zero. Thus $x_1 =0$, and we conclude that $\tilde{f} = f$.
\end{example}

\subsection{Good characteristic} \label{ss:goodchar}

Suppose $p$ is a good prime for $G$. As in the bad characteristic case, we describe an algorithm to deduce that $\cV$ is monogamous. In good characteristic there is a considerable amount of theory at our disposal. In particular, every $e\in\cN$ has an associated cocharacter: that is a homomorphism $\tau:\G_m\to G$ such that under the adjoint action, we have $\tau(t)\cdot e=t^2e$ and $\tau$ evaluates in the derived subgroup of the Levi subgroup in which $e$ is distinguished.

\begin{lemma} \label{lem:hunique}
Suppose $p$ is good for $G$, and let $(e,h_1,f_1)$ be an $\sl_2$-triple with $e,f_1 \in \cV$. Then there exists a cocharacter $\tau$ associated to $e$ such that $\Lie(\tau(\G_m)) = \langle h_1 \rangle$. Thus if $(e,h_2,f_2)$ is also an $\sl_2$-triple with $f_2 \in \cV$, then $h_2$ is $C_G(e)$-conjugate to $h_1$. Moreover, if $h_1 = h_2$ and $\g = \bigoplus_i \g(i)$ is the grading of $\g$ with respect to $\tau$ we have
\[ f_1 - f_2 \in \bigoplus_{r > 0} \g_e(-2+rp), \]
where $\g_e(i):= \c_\g(e) \cap \g(i)$. 
\end{lemma}

\begin{proof}
We start by proving that $h_i$ is toral. By Lemma \ref{lem:goodcharpsub}, the subalgebra $\s_i = \langle e,h_i,f_i \rangle$ is either a $p$-subalgebra or non-$G$-cr. In the former case, we are done. In the latter case, the argument in the proof of \cite[Lemma~6.1]{ST18} applies, showing $h_i$ is toral. 

Now we apply \cite[Proposition~2.8]{ST18}. This yields cocharacters $\tau_i$ associated to $e$ such that $\Lie(\tau_i(\G_m)) = \langle h_i \rangle$. By \cite[Lemma~5.3]{Jan04}, any two cocharacters associated to $e$ are $C_G(e)$-conjugate. Therefore, $h_1$ and $h_2$ are $C_G(e)$-conjugate and so up to $C_G(e)$-conjugacy we may assume they are equal. Set $h = h_1 = h_2$. 

Since $[e,f_1-f_2] = h - h = 0$ we know $f_1 - f_2 \in \c_\g(e)$. Furthermore, $[h,f_1-f_2] = -2(f_1 - f_2)$ and hence $f_1-f_2 \in \bigoplus_r \g(-2 + rp)$. The conclusion follows by noting that $\c_\g(e)$ is contained in the nonnegative graded part of $\g$. 
\end{proof}

Fix $0 \neq e \in \cV$ for the remainder of this section. Choose a cocharacter $\tau$ associated to $e$ such that $h \in \Lie(\tau(\G_m)) \subset \t$ with $[h,e]=2e$. In practice, we use the representatives and associated cocharacters given in \cite{LT11}. We know from Pommerening \cite{Pom1,Pom2} and Lemma \ref{lem:hunique} that there exists a unique $\bar{f} \in \g(-2)$ such that $(e,h,\bar{f})$ is an $\sl_2$-triple. Furthermore, if $(e,h,f)$ is another $\sl_2$-triple then $f = \bar{f} + f'$ with $f' \in \bigoplus_{r > 0} \g_e(-2+rp)$. Therefore, we need to prove that if $f \in \cV$ then up to $C = C_G(e) \cap C_G(h)$-conjugacy we have $f = \bar{f}$, i.e. that $f' = 0$.  

To do this we use the ad-hoc method from Section \ref{ss:badchar}. Indeed, by Lemma \ref{l:partialequalsKostant} it suffices to prove that $f = \bar{f}$ when $f \preceq e$. We now apply Steps 1--3 starting with the space $F = f + \bigoplus_{r > 0} \g_e(-2+rp)$.  

\begin{example}
We give a final example, this time in good characteristic. Let $G$ be of type $E_7$ and $p =7$. Consider $e = e_{\substack{1 \\ \ } \substack{0 \\ \ } \substack{0 \\ 0} \substack{0 \\ \ } \substack{0 \\ \ } \substack{0 \\ \ } }+ e_{\substack{0 \\ \ } \substack{1 \\ \ } \substack{0 \\ 0} \substack{0 \\ \ } \substack{0 \\ \ } \substack{0 \\ \ }} + e_{\substack{0 \\ \ } \substack{0 \\ \ } \substack{1 \\ 0} \substack{0 \\ \ } \substack{0 \\ \ } \substack{0 \\ \ }} + e_{\substack{0 \\ \ } \substack{0 \\ \ } \substack{0 \\ 0} \substack{1 \\ \ } \substack{0 \\ \ } \substack{0 \\ \ } } + e_{\substack{0 \\ \ } \substack{0 \\ \ } \substack{0 \\ 0} \substack{0 \\ \ } \substack{1 \\ \ } \substack{0 \\ \ } }$ which is  a representative of the $(A_5)^{(2)}$ orbit; thus $e \in \cV$ by Lemma \ref{lem:cVisclosed}. Furthermore, by \cite[p.~109]{LT11}, $e$ has an associated cocharacter with the following $\tau$-weights on simple roots {\arraycolsep=2pt $\tau = {\Small\begin{array}{c c r c c c c}2&2&2&2&2&-5\\&&-9\end{array}}$}. One uses the inverse of the Cartan matrix to convert this into a sum of coroots, yielding $h=2h_1+6h_3+5h_4+6h_5+2h_6\in \Lie(\tau(\mathbb{G}_m))$ (this process is how one gets from the diagram of the distinguished cocharacters in Section~11 to the cocharacters given in Table~3 of \textit{ibid.}). The unique $\bar{f} \in \g(-2)$ such that $(e,h,\bar{f})$ is an $\sl_2$-triple is then given by $\bar{f} = 2e_{\substack{-1 \\ \ } \substack{0 \\ \ } \substack{0 \\ 0} \substack{0 \\ \ } \substack{0 \\ \ } \substack{0 \\ \ } }+ 6e_{\substack{-0 \\ \ } \substack{1 \\ \ } \substack{0 \\ 0} \substack{0 \\ \ } \substack{0 \\ \ } \substack{0 \\ \ }} + 5e_{\substack{-0 \\ \ } \substack{0 \\ \ } \substack{1 \\ 0} \substack{0 \\ \ } \substack{0 \\ \ } \substack{0 \\ \ }} + 6e_{\substack{-0 \\ \ } \substack{0 \\ \ } \substack{0 \\ 0} \substack{1 \\ \ } \substack{0 \\ \ } \substack{0 \\ \ } } + 2e_{\substack{-0 \\ \ } \substack{0 \\ \ } \substack{0 \\ 0} \substack{0 \\ \ } \substack{1 \\ \ } \substack{0 \\ \ } }$. 

Let $F = f + \bigoplus_{r > 0} \g_e(-2+rp)$, which is $6$-dimensional. We build a generic element $\tilde{f}$ of $F$ as in Section \ref{ss:badchar} with six variables. Following Step 1 by enforcing the linear equations from $[e,\tilde{f}] = h$ yields 
\[ \tilde{f} = \bar{f} + x_1e_{\substack{-1 \\ \ } \substack{2 \\ \ } \substack{3 \\ 2} \substack{2 \\ \ } \substack{1 \\ \ } \substack{1 \\ \ } }+x_2e_{\substack{-0 \\ \ } \substack{0 \\ \ } \substack{1 \\ 1} \substack{1 \\ \ } \substack{0 \\ \ } \substack{0 \\ \ } } +x_2 e_{\substack{-0 \\ \ } \substack{1 \\ \ } \substack{1 \\ 1} \substack{0 \\ \ } \substack{0 \\ \ } \substack{0 \\ \ } }+ x_3e_{\substack{-0 \\ \ } \substack{0 \\ \ } \substack{0 \\ 0} \substack{0 \\ \ } \substack{0 \\ \ } \substack{1 \\ \ } } + x_4 e_{\substack{1 \\ \ } \substack{1 \\ \ } \substack{1 \\ 0} \substack{1 \\ \ } \substack{1 \\ \ } \substack{1 \\ \ } }- x_5e_{\substack{1 \\ \ } \substack{2 \\ \ } \substack{2 \\ 1} \substack{1 \\ \ } \substack{1 \\ \ } \substack{0 \\ \ } }+x_5e_{\substack{1 \\ \ } \substack{1 \\ \ } \substack{2 \\ 1} \substack{2 \\ \ } \substack{1 \\ \ } \substack{0 \\ \ } }+x_6e_{\substack{2 \\ \ } \substack{3 \\ \ } \substack{4 \\ 2} \substack{3 \\ \ } \substack{2 \\ \ } \substack{1 \\ \ } }.\] 

On this occasion $C:=C_G(e) \cap C_G(h)$ is finite and we move onto Step 3. 

Let $M = \ad(\tilde{f})^{p-1}$. We calculate that $R = \text{rank}(\ad(e)^{p-1}) =13$. So if $\epsilon(\tilde{f}) = f' \preceq e$ for some evaluation map $\epsilon$, we have that the rank of $\epsilon(M)$ is at most 13. 

Ordering the basis of $\g$ as in Magma, we use the $13 \times 13$ submatrix $S$ of $M$ corresponding to the rows $r$ and columns $c$ where \begin{align*}r &= \{75, 125, 62, 94, 87, 129, 120, 97, 42, 82, 23, 34, 108 \}, \\ c&=\{37, 100, 24, 52, 50, 109, 92, 60, 14, 40, 5, 9, 72\}.\end{align*} 
The submatrix $S$ is upper triangular and all diagonal entries are elements of $\mathbb{F}_p^*$. The only other nonzero entries in $S$ can be found in row one, which is
\[ (1  \ \ 0  \ \ 4x_2 \ \  0  \ \ 0 \ \ 0 \ \ 5x_5 \ \ 0 \ \  0 \ \ 0 \ \ 0 \ \ 0 \ \ 0).\]
We find that 42 rows of $M$ have zero entries in every column in $c$, so each of these rows is zero. An example of such a row is the eighth row of $M$. In row 8 we find $x_4, 3x_5$ and $-x_6$ in columns $11,15$ and $70$ respectively. It follows that $x_4 = x_5 = x_6 = 0$. Similarly the 133rd row of $M$ then allows us to deduce that $x_1=x_2=x_3 = 0$. Thus $\tilde{f} =f $ as required.
\end{example}

\section{Proof of Theorems \ref{thm:mainKos} and \ref{thm:mainGcr}} \label{sec:thmproofs}

Proposition \ref{prop:Gcrovergp} shows that for each $e \in \cV$ there exists an $\sl_2$-triple $(e,h,f)$ with $\s = \langle e,h,f \rangle = \Lie(X)$ for a $G$-cr subgroup $X < G$ of type $A_1$. Thus $f$ is $G$-conjugate to $e$ and hence $f \in \cV$. We have demonstrated in Section \ref{sec:monogamycV} that any other $\sl_2$-triple $(e,h',f')$ with $f' \in \cV$ is $C_G(e)$-conjugate to $(e,h,f)$. Therefore $\s' = \langle e,h',f' \rangle$ is $G$-conjugate to $\s$ and hence $G$-cr. 

It remains to prove that $\cV$ is the unique maximal closed $G$-stable subvariety of $\cN$ satisfying both the monogamy and $A_1$-$G$-cr conditions. 

For $G$ of classical type, it follows from \cite[Theorem 1.1]{GP} that $\cV$ is maximal with respect to being monogamous and the unique subvariety with this property. For the $A_1$-$G$-cr property, the ingredients are in \emph{ibid.} but let us spell out the details, as these essentially make up the strategy for the groups of exceptional type used below.

\begin{Proposition} \label{prop:classicalmain}
Let $G$ be a simple algebraic group of classical type. Then $\cV$ is the unique maximal closed $G$-stable $A_1$-$G$-cr subvariety of $\cN$. 
\end{Proposition}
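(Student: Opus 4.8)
The plan is to prove the two halves of the statement separately: that $\cV$ is $A_1$-$G$-cr, and that nothing larger is. The first half has essentially been recorded already in the preamble to Section~\ref{sec:thmproofs}: for $e\in\cV$, Proposition~\ref{prop:Gcrovergp} supplies an $\sl_2$-triple $(e,h,f)$ with $\langle e,h,f\rangle=\Lie(X)$ for a $G$-cr subgroup $X\leq G$ of type $A_1$, so $f\in G\cdot e\subseteq\cV$; and the classical case of Theorem~\ref{thm:mainKos} (i.e.\ \cite[Theorem~1]{GP} together with Lemma~\ref{lem:cVisclosed}) says that every $\sl_2$-triple $(e,h',f')$ with $f'\in\cV$ is $C_G(e)$-conjugate to $(e,h,f)$, whence $\langle e,h',f'\rangle$ is $G$-conjugate to a $G$-cr subalgebra and so is itself $G$-cr. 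Thus $\cV$ is $A_1$-$G$-cr, and it remains only to establish maximality.

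For maximality I would argue by contradiction: suppose $\X\subseteq\cN$ is a closed $G$-stable $A_1$-$G$-cr subvariety with $\X\not\subseteq\cV$. Since $\X$ is a finite union of orbit closures, choose an orbit $\cO=G\cdot e\subseteq\X$ with $\cO\not\subseteq\cV$ and $\dim\cO$ minimal with this property; then every orbit strictly inside $\overline\cO$ lies in $\cV$, so $\cO$ is a \emph{minimal neighbour} of $\cV$ and $\overline\cO=\cO\cup(\overline\cO\cap\cV)$. Now $\cO\not\subseteq\cV$ for one of two reasons (the $G_2$ exception being irrelevant here). \emph{Case 1:} $e$ is distinguished in a Levi subalgebra $\l=\Lie(L)$ with a factor of type $A_{p-1}$. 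Then Proposition~\ref{P:SLpE} gives an $\sl_2$-triple $(e,h,f)$ with $\s=\langle e,h,f\rangle$ non-$G$-cr and $f\in\overline{L\cdot e}\subseteq\overline\cO\subseteq\X$, so $e,f\in\X$ contradicts the $A_1$-$G$-cr hypothesis. \emph{Case 2:} $e^{[p]}\neq0$, so in the natural module $V=\kk^m$ (with $G=\Cl(V)$) the element $e$ has a Jordan block of size $q>p$. Here I would produce the bad triple by hand: realising $e$ as the sum of a regular nilpotent of $\Cl(W)$ for a non-degenerate subspace $W\subseteq V$ of dimension $q$ and a nilpotent with the remaining Jordan blocks on $W^{\perp}$, one uses on $W$ the $\mathfrak{sl}_2$-module structure with $e|_W$ a single $q\times q$ Jordan block (the ``Jordan-block lattice'' reduction of the $q$-dimensional irreducible $\sl_2$-module, which is defined over $\Z$ and carries the required symmetric or symplectic form). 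This yields an $\sl_2$-triple $(e,h,f)$ inside $\Cl(W)\subseteq\g$ in which $f$ has strictly smaller Jordan blocks, hence $f\in\overline\cO\cap\cV\subseteq\X$, while $W$ is a non-semisimple $\s$-module, so $V=W\oplus W^{\perp}$ is non-semisimple and $\s=\langle e,h,f\rangle$ is non-$\Cl(V)$-cr, again contradicting the $A_1$-$G$-cr hypothesis. Either way we get a contradiction, so $\X\subseteq\cV$.

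The routine part is Case~1, which is just a citation. The main obstacle is Case~2: one must check that \emph{every} minimal neighbour of $\cV$ which fails the restricted-nullcone condition has the shape needed for the construction to land $f$ back inside $\cV$ (equivalently inside $\overline{G\cdot e}$), and verify that the constructed triple is genuinely non-$G$-cr. This is the partition/dominance-order bookkeeping carried out in \cite{GP} type-by-type for $B$, $C$ and $D$; type $D$ additionally demands care with very-even partitions and with the split Hasse diagram described in Section~\ref{ss:nilporbits}, so that one confirms $f$ is not itself a very-even class lying outside $\cV$. Once that bookkeeping is in place, the two cases above cover all minimal neighbours and the proof is complete.
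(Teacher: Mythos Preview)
Your proposal is correct and follows the same two-case split as the paper (Case~1 via Proposition~\ref{P:SLpE}, Case~2 via a construction drawn from \cite{GP}), but the execution of Case~2 and the surrounding scaffolding differ.

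For Case~2 the paper is shorter. Having cited \cite{GP} for an $\sl_2$-triple $(e,h,f)$ with $f^{[p]}=0$ and $f\in\overline{G\cdot e}\subseteq\X$, it appeals directly to Lemma~\ref{lem:goodcharpsub}: since $e^{[p]}\neq 0$ the subalgebra $\s=\langle e,f\rangle$ is not a $p$-subalgebra, and the contrapositive of that lemma forces $\s$ to be non-$G$-cr. This bypasses your hands-on module-semisimplicity argument entirely. It also makes the minimal-neighbour reduction unnecessary: the paper works with an arbitrary $e\in\X\setminus\cV$, and only ever needs $f\in\X$, which is immediate from $f\in\overline{G\cdot e}$ and closedness of $\X$. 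In particular there is no need to place $f$ in $\cV$, so your very-even-partition bookkeeping in type~$D$ never arises.

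One small caution on your Case~2 sketch as written: isolating a \emph{single} Jordan block of size $q$ on a non-degenerate subspace $W$ is not always possible in types $B$, $C$, $D$ (parity constraints force certain block sizes to occur in pairs), so the construction needs more case analysis than you indicate before it yields a triple inside $\Cl(V)$. Since you defer the details to \cite{GP} this is not a genuine gap, but it does illustrate why the paper's uniform route through Lemma~\ref{lem:goodcharpsub} is preferable here.
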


\begin{proof}
Suppose $\X$ is a $G$-stable closed subvariety of $\cN$ satisfying the $A_1$-$G$-cr condition and $\X \not\subseteq\cV$. Let $e \in \X \setminus \cV$. 

First suppose $e$ is distinguished in a Levi subalgebra $\l = \Lie(L)$ with $L$ having a factor of type $A_{p-1}$. Proposition \ref{P:SLpE} shows that $e$ is contained in an $\sl_2$-triple generating a non-$G$-cr subalgebra, a contradiction (these non-$G$-cr subalgebras are also exhibited in \cite[Section~2.4]{GP}).

By definition of $\cV$, we may now assume that $e^{[p]} \neq 0$. The discussion before Proposition 2.2 in \emph{ibid.} exhibits an $\sl_2$-triple $(e,h,f)$ with $f^{[p]}=0$ and $f$ in $\bar{G \cdot e}$, thus $f \in \X$. The argument in the first paragraph shows that neither $e$ nor $f$ are distinguished in a Levi subalgebra with a factor of type $A_{p-1}$. By Lemma \ref{lem:goodcharpsub}, the $\sl_2$-subalgebra $\langle e, f\rangle$ is non-$G$-cr, a final contradiction.   
\end{proof}

\begin{Proposition} \label{P:uniquemax}
Let $G$ be a simple algebraic group of exceptional type. The variety $\cV$ is the unique maximal closed $G$-stable subvariety of $\cN$ satisfying both the monogamy and $A_1$-$G$-cr conditions. 
\end{Proposition}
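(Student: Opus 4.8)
The strategy is to show that any closed $G$-stable subvariety $\X$ of $\cN$ that is both monogamous and $A_1$-$G$-cr must be contained in $\cV$; combined with the work of Section~\ref{sec:monogamycV} (which shows $\cV$ itself is monogamous) and Proposition~\ref{prop:Gcrovergp}/the discussion in Section~\ref{sec:thmproofs} (which shows $\cV$ is $A_1$-$G$-cr), this gives maximality. So suppose for contradiction that $\X \not\subseteq \cV$, i.e.\ $\X$ contains some nilpotent orbit not contained in $\cV$. Since $\X$ is closed and $G$-stable, it is a union of orbit closures, so it contains $\bar\O$ for some orbit $\O$ which is a neighbour of $\cV$ in the Hasse diagram; by taking $\O$ minimal we may assume $\O$ is a \emph{minimal neighbour} of $\cV$, one of the orbits listed in Table~\ref{tab:minneigh} (Lemma~\ref{lem:minneigh}). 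It then suffices to produce, for each such minimal-neighbour orbit $\O$ and each relevant $p$, an element $e \in \O$ together with an $\sl_2$-triple $(e,h,f)$ such that either $f \in \cV$ but $(e,h,f)$ is not $C_G(e)$-conjugate to the triple coming from the $G$-cr $A_1$ (violating monogamy of $\X$, since $e,f\in\X$), or $\s = \langle e,h,f\rangle$ is non-$G$-cr with $f \in \bar{G\cdot e} \subseteq \X$ (violating the $A_1$-$G$-cr condition). In either case $\X$ cannot contain $\O$, a contradiction.

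\textbf{Key steps, in order.} First, reduce to minimal neighbours as above, using closedness and $G$-stability of $\X$ and Lemma~\ref{lem:minneigh}. Second, split into two cases according to whether $\O$ lies in the restricted nullcone $\cN_p$. If $e \in \O$ has $e^{[p]} \neq 0$, then since $\O$ is a neighbour of $\cV$ we expect to find (as in the classical case, Proposition~\ref{prop:classicalmain}, and using the explicit representatives of \cite{LS04,SteMin,LT11}) an $\sl_2$-triple $(e,h,f)$ with $f^{[p]} = 0$ and $f \in \bar{G\cdot e}$; one then checks $f \in \cV$ (using the rank order of Lemma~\ref{lem:rankorder} and the structure of $\cV$ from Lemma~\ref{lem:cVisclosed}) but that $(e,h,f)$ is not $C_G(e)$-conjugate to the $G$-cr triple $(e,\bar h,\bar f)$ with $\bar f$ in $\O$ guaranteed by Proposition~\ref{prop:Gcrovergp}, so monogamy fails for $\X$. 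If instead $e \in \O \cap \cN_p$, then $\O$ is either regular in a Levi with an $A_{p-1}$ factor (e.g.\ $A_4$ when $p=5$, $A_6$ when $p=7$, etc.), in which case Proposition~\ref{P:SLpE} directly exhibits a non-$G$-cr $\sl_2$-triple $(e,h,f)$ with $f \in \bar{L\cdot e} \subseteq \X$, contradicting $A_1$-$G$-cr; or $\O$ is the $G_2(a_1)$ orbit when $G=G_2$, $p=3$, which requires a separate hands-on argument. Third, dispatch any remaining sporadic cases ($G_2(a_1)$ in $G_2$; the subregular-type neighbours where the element is neither restricted nor obviously regular in an $A_{p-1}$-Levi) by explicit computation of an offending $\sl_2$-triple, most cleanly via the Magma setup of Section~\ref{ss:badchar}/\ref{ss:goodchar}: build the generic $f$ in the relevant $\ad(h)$-eigenspace, and exhibit a second triple either violating conjugacy or generating a non-$G$-cr subalgebra.

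\textbf{Main obstacle.} The bulk of the difficulty is bookkeeping: one must go orbit-by-orbit through Table~\ref{tab:minneigh} and verify, for the correct element $e$, that the constructed $f$ genuinely lands in $\cV$ (so that $e,f \in \X$ really does force a monogamy violation of $\X$) rather than outside it, and that the two triples are genuinely non-conjugate under $C_G(e)$ rather than merely different-looking. The rank-comparison partial order $\preceq$ of Lemma~\ref{lem:rankorder} is the right tool here --- $f \in \cV$ and $e \notin \cV$ give $f \prec e$, hence $e \not\sim f$, which already shows the triples are inequivalent --- but one still has to confirm in each case that such an $f$ with $f \in \cV$ exists, i.e.\ that the $G$-cr triple for $e$ has its $f$-part conjugate to $e$ (true by Proposition~\ref{prop:Gcrovergp}) while a \emph{different} admissible $f$-part lies strictly below in $\cV$. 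Identifying that second $f$ --- typically the lowest orbit in $\cV$ lying in $\bigoplus_{r>0}\g_e(-2+rp)$ --- and checking it is nonzero is where the case analysis and the computer algebra do the real work; there is no conceptual shortcut beyond what Propositions~\ref{prop:Gcrovergp}, \ref{P:SLpE} and Lemma~\ref{lem:goodcharpsub} already provide.
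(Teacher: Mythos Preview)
Your overall architecture is exactly that of the paper: reduce to a minimal neighbour $\O$ of $\cV$ (Lemma~\ref{lem:minneigh}), then split according to whether $e\in\O$ is regular in a Levi with an $A_{p-1}$ factor, has $e^{[p]}\neq 0$, or is the exceptional $G_2(a_1)$ orbit for $(G_2,p=3)$. The paper confirms by inspection of Table~\ref{tab:minneigh} that these three cases are exhaustive, so your anticipated extra ``subregular-type'' sporadic cases do not in fact arise: every minimal neighbour not covered by Proposition~\ref{P:SLpE} and not equal to $G_2(a_1)$ genuinely has $e^{[p]}\neq 0$, with $e$ distinguished in a Levi $L$ for which $p$ is good and which has no $A_{p-1}$ factor. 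No further computation is needed there.

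There is, however, a real gap in your treatment of the $e^{[p]}\neq 0$ case. You propose to violate monogamy by comparing the triple $(e,h,f)$ (with $f^{[p]}=0$) against ``the $G$-cr triple $(e,\bar h,\bar f)$ with $\bar f\in\O$ guaranteed by Proposition~\ref{prop:Gcrovergp}''. But Proposition~\ref{prop:Gcrovergp} has the hypothesis $e\in\cN_p$, which fails precisely here; there is no reason a $G$-cr $\sl_2$ through $e$ should exist when $e^{[p]}\neq 0$, and in general it does not. The paper repairs this in two ways. First, it observes that since $e^{[p]}\neq 0$ the subalgebra $\s=\langle e,h,f\rangle$ is not a $p$-subalgebra, so by Lemma~\ref{lem:goodcharpsub} (which applies because $e,f$ are both distinguished in Levis with no $A_{p-1}$ factor) $\s$ is non-$G$-cr; together with $e\in\O\subseteq\X$ and $f\in\cV\subseteq\bar\O\subseteq\X$ this already kills the $A_1$-$G$-cr property. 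Second, for monogamy it applies Proposition~\ref{prop:Gcrovergp} to $f$ (which \emph{is} in $\cN_p$) rather than to $e$: this yields a $G$-cr triple $(f,h',e')$ with $e'$ conjugate to $f$, and then $(f,-h,e)$ and $(f,h',e')$ are two non-$C_G(f)$-conjugate triples with $f,e,e'\in\X$. So the fix is not more computation but simply moving the base point of the argument from $e$ to $f$.
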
 

\begin{proof}
Suppose $\X$ is a $G$-stable closed subvariety of $\cN$ satisfying either the monogamy or $A_1$-$G$-cr condition and $\X \not\subseteq\cV$. 

First suppose there exists $e \in \X$ which is distinguished in a Levi subalgebra $\l = \Lie(L)$ with a factor of type $A_{p-1}$. Then Propositions \ref{prop:Gcrovergp} and \ref{P:SLpE} furnish us with two $\sl_2$-triples $(e,h,f)$ and $(e,h',f')$ such that the first generates a $G$-cr subalgebra and the second generates a non-$G$-cr subalgebra. Moreover, $f$ is in the same $G$-class as $e$ and $f'$ is in the closure of the $G$-class of $e$. Hence $\X$ does not satisfy either condition, a contradiction.

Thus, we now assume every element of $\X$ is distinguished in a Levi subalgebra with no factors of type $A_{p-1}$. Since $\X \not\subseteq \cV$, there exists a nilpotent class in $\X$ with representative $e$ distinguished in a Levi subalgebra $\l = \Lie(L)$ of $\g$ such that $e^{[p]} \neq 0$. 

Suppose $p$ is good for $L$. From \cite[Section~2.4]{PremetStewart} we find an $\sl_2$-triple $(e,h,f)$ of $\l$ with $f^{[p]} = 0$. Since $p$ is good for $L$, we may simply inspect the Hasse diagrams of each factor of $L$ to deduce that every restricted nilpotent class is contained in the closure of each non-restricted distinguished class. Thus, $f \in X$. Furthermore, $\s = \langle e,f \rangle \cong \sl_2$ is a non-$L$-cr subalgebra by Lemma \ref{lem:goodcharpsub}. Hence by Lemma \ref{L:GcrLcr}, $\X$ does not satisfy the $A_1$-$G$-cr condition. Proposition \ref{prop:Gcrovergp} yields an $\sl_2$-triple $(f,h',e')$ which generates a $G$-cr $\sl_2$-subalgebra, and moreover $e'$ is in the same $G$-class as $f$. Therefore, $f$ is contained in two non-conjugate $\sl_2$-triples. Thus $\X$ does not satisfy the monogamy condition either. 

In the remaining cases $p$ is bad for $L$ (and hence for $G$) so $L$ has an exceptional factor (including the cases $L = G$). For each class, we choose $e$ to be the representative as in \cite{LS12}. Then \cite[Theorem~1~(iii)(b)]{LS12} provides a parabolic subgroup $P = QL$ of $G$ and a $1$-dimensional torus $T_1 < Z(L)$ with the following properties. Let $Q_{\geq 2}$ be the product of all root groups for which the $T_1$-weight is at least $2$. Then $e \in \q_{\geq 2}:=\Lie(Q_{\geq 2})$ and moreover, the closure of the $P$-orbit of $e$ is equal to $\q_{\geq 2}$. Thus, $\q_{\geq 2} \subseteq \X$. Unless $G$ is of type $G_2$ (this case is dealt with momentarily), a straightforward calculation shows that $\q_{\geq 2}$ contains a representative of the $A_{p-1}$-class. Thus, so does $\X$, which is a contradiction. 

Finally, let $G$ be of type $G_2$ and $p=3$. The only two classes not contained in $\cV$ are the regular and the subregular. Since the closure of the regular class contains the subregular class it suffices to assume $\X$ contains the subregular class. A representative for this orbit is $e = e_{\alpha_2} + e_{-3\alpha_1 - \alpha_2}$. This is a regular nilpotent element in $\m = \Lie(M)$ where $M$ is the standard subsystem subgroup of type $A_2$ corresponding to the simple roots $\alpha_2$ and $-3\alpha_1 - 2\alpha_2$.

As in the proof of Proposition \ref{P:SLpE}, there exists an $\sl_2$-triple $(e,h,f)$ in $\m$ such that $\s = \langle e, f \rangle$ is non-$M$-cr. Furthermore, $f$ is in the orbit labelled $A_1$ (both as an $A_2$-orbit and $G_2$-orbit). We claim that $\s$ is non-$G$-cr. By Proposition $\ref{prop:Gcrovergp}$, the element $f$ is contained in an $\sl_2$-triple generating a $G$-cr subalgebra and by the claim, the $\sl_2$-triple $(f,-h,e)$ generates a non-$G$-cr subalgebra. Hence $\X$ does not satisfy either condition. 

For the claim, note that $\s$ is certainly $G$-reducible since it is non-$M$-cr. All $G$-cr $\sl_2$-subalgebras which are $G$-reducible are contained in a Levi subalgebra. In this low-rank case, it immediately follows that all such $\sl_2$-subalgebras are $G$-conjugate to either $\l_1 = \langle e_{\pm \alpha_1} \rangle$ or $\l_2 = \langle e_{\pm \alpha_2} \rangle$. Therefore a $G$-cr $\sl_2$-subalgebra only contains nilpotent elements in the $A_1$ or $\tilde{A}_1$ classes. The claim follows since $\s$ contains $e$ which is in the subregular class.
\end{proof}

{\footnotesize
\bibliographystyle{amsalpha}
\bibliography{bib}}

\providecommand{\bysame}{\leavevmode\hbox to3em{\hrulefill}\thinspace}
\providecommand{\MR}{\relax\ifhmode\unskip\space\fi MR }
\providecommand{\MRhref}[2]{%
  \href{http://www.ams.org/mathscinet-getitem?mr=#1}{#2}
}
\providecommand{\href}[2]{#2}
\begin{thebibliography}{McN07}

\bibitem[Blo62]{Block}
Richard Block, \emph{Trace forms on {L}ie algebras}, Canadian J. Math. \textbf{14} (1962), 553--564. \MR{140555}

\bibitem[Bou05]{Bourb05}
N.~Bourbaki, \emph{Lie groups and {L}ie algebras. {C}hapters 7--9}, Elements of Mathematics (Berlin), Springer-Verlag, Berlin, 2005, Translated from the 1975 and 1982 French originals by Andrew Pressley. \MR{2109105 (2005h:17001)}

\bibitem[Car93]{Car93}
Roger~W. Carter, \emph{Finite groups of {L}ie type}, Wiley Classics Library, John Wiley \& Sons Ltd., Chichester, 1993, Conjugacy classes and complex characters, Reprint of the 1985 original, A Wiley-Interscience Publication. \MR{MR1266626 (94k:20020)}

\bibitem[Don90]{Donk90}
Stephen Donkin, \emph{The normality of closures of conjugacy classes of matrices}, Invent. Math. \textbf{101} (1990), no.~3, 717--736. \MR{1062803}

\bibitem[GP24]{GP}
Simon~M. Goodwin and Rachel Pengelly, \emph{On $\mathfrak{sl}_2$-triples for classical algebraic groups in positive characteristic}, Transform. Groups \textbf{29} (2024), 1005–1027.

\bibitem[Jac51]{Jacobson}
Nathan Jacobson, \emph{Completely reducible {L}ie algebras of linear transformations}, Proc. Amer. Math. Soc. \textbf{2} (1951), 105--113.

\bibitem[Jan98]{Jan98}
J.~C. Jantzen, \emph{Representations of {L}ie algebras in prime characteristic}, Representation theories and algebraic geometry ({M}ontreal, {PQ}, 1997), NATO ASI Series C, vol. 514, Kluwer Acad. Publ., Dordrecht, 1998, pp.~185--235.

\bibitem[Jan04]{Jan04}
\bysame, \emph{Nilpotent orbits in representation theory}, Lie theory, Progr. Math., vol. 228, Birkh{\"a}user Boston, Boston, MA, 2004, pp.~1--211. \MR{2042689 (2005c:14055)}

\bibitem[Kos59]{Kostant}
Bertram Kostant, \emph{The principal three-dimensional subgroup and the {B}etti numbers of a complex simple {L}ie group}, Amer. J. Math. \textbf{81} (1959), 973--1032. \MR{0114875 (22 \#5693)}

\bibitem[LS12]{LS12}
Martin~W. Liebeck and Gary~M. Seitz, \emph{Unipotent and nilpotent classes in simple algebraic groups and {L}ie algebras}, Mathematical Surveys and Monographs, vol. 180, American Mathematical Society, Providence, RI, 2012. \MR{2883501}

\bibitem[LT11]{LT11}
R.~Lawther and D.~M. Testerman, \emph{Centres of centralizers of unipotent elements in simple algebraic groups}, Mem. Amer. Math. Soc. \textbf{210} (2011), no.~988, vi+188. \MR{2780340 (2012c:20127)}

\bibitem[McN05]{McN05}
G.~J. McNinch, \emph{Optimal {${\rm SL}(2)$}-homomorphisms}, Comment. Math. Helv. \textbf{80} (2005), no.~2, 391--426. \MR{2142248 (2006f:20055)}

\bibitem[McN07]{McN07}
George McNinch, \emph{Completely reducible {L}ie subalgebras}, Transformation Groups \textbf{12} (2007), no.~1, 127--135.

\bibitem[Mor42]{Morozov}
V.~V. Morozov, \emph{On a nilpotent element in a semi-simple {Lie} algebra}, C. R. (Dokl.) Acad. Sci. URSS, n. Ser. \textbf{36} (1942), 83--86.

\bibitem[Pom77]{Pom1}
Klaus Pommerening, \emph{\"{U}ber die unipotenten {K}lassen reduktiver {G}ruppen}, J. Algebra \textbf{49} (1977), no.~2, 525--536. \MR{480767}

\bibitem[Pom80]{Pom2}
\bysame, \emph{\"{U}ber die unipotenten {K}lassen reduktiver {G}ruppen. {II}}, J. Algebra \textbf{65} (1980), no.~2, 373--398. \MR{585729}

\bibitem[PS19]{PremetStewart}
Alexander Premet and David~I. Stewart, \emph{Classification of the maximal subalgebras of exceptional {L}ie algebras over fields of good characteristic}, Journal of the American Mathematical Society \textbf{32} (2019), no.~4, 965--1008.

\bibitem[Ser05]{Ser05}
J-P. Serre, \emph{Compl{\`e}te r{\'e}ductibilit{\'e}}, Ast{\'e}risque (2005), no.~299, Exp. No. 932, viii, 195--217, S{{\'e}}minaire Bourbaki. Vol. 2003/2004. \MR{2167207 (2006d:20084)}

\bibitem[SF88]{SF88}
H.~Strade and R.~Farnsteiner, \emph{Modular {L}ie algebras and their representations}, Monographs and Textbooks in Pure and Applied Mathematics, vol. 116, Marcel Dekker Inc., New York, 1988. \MR{929682 (89h:17021)}

\bibitem[Spa82]{Spa82}
Nicolas Spaltenstein, \emph{Classes unipotentes et sous-groupes de {B}orel}, Lecture Notes in Mathematics, vol. 946, Springer-Verlag, Berlin-New York, 1982. \MR{672610}

\bibitem[ST18]{ST18}
David~I. Stewart and Adam~R. Thomas, \emph{The {J}acobson-{M}orozov theorem and complete reducibility of {L}ie subalgebras}, Proc. Lond. Math. Soc. (3) \textbf{116} (2018), no.~1, 68--100. \MR{3747044}

\bibitem[ST24]{SteTh24}
David~I. Stewart and Adam~R. Thomas, \emph{On extensions of the jacobson-morozov theorem to even characteristic}, arxiv:2401.07303 (2024).

\bibitem[Ste10]{SteG2}
D.~I. Stewart, \emph{The reductive subgroups of {$G_2$}}, J. Group Theory \textbf{13} (2010), no.~1, 117--130. \MR{2604850 (2011c:20099)}

\bibitem[Ste16]{SteMin}
David~I. Stewart, \emph{On the minimal modules for exceptional {L}ie algebras: {J}ordan blocks and stabilizers}, LMS J. Comput. Math. \textbf{19} (2016), no.~1, 235--258. \MR{3530500}

\bibitem[Stu71]{Stu71}
U.~Stuhler, \emph{Unipotente und nilpotente {K}lassen in einfachen {G}ruppen und {L}iealgebren vom {T}yp {$G\sb{2}$}}, Indag. Math. \textbf{33} (1971), 365--378, Nederl. Akad. Wetensch. Proc. Ser. A {\bf 74}. \MR{302723}

\bibitem[Tho00]{THOMSEN}
Jesper~Funch Thomsen, \emph{Normality of certain nilpotent varieties in positive characteristic}, Journal of Algebra \textbf{227} (2000), no.~2, 595--613.

\bibitem[XS15]{XIAO201533}
Husileng Xiao and Bin Shu, \emph{Normality of orthogonal and symplectic nilpotent orbit closures in positive characteristic}, Journal of Algebra \textbf{443} (2015), 33--48.

\end{thebibliography}

\end{document}